\newtheorem{theorem}{\bf Theorem}[section]
\newtheorem{lemma}[theorem]{\bf Lemma}
\newtheorem{proposition}[theorem]{\bf Proposition}
\newtheorem{corollary}[theorem]{\bf Corollary}
\renewenvironment{proof}[1][Proof]{\noindent\textbf{#1.} }{\ \rule{0.4em}{0.4em}}
\theoremstyle{definition}
\newtheorem{remark}[theorem]{\bf Remark}
\definecolor{Be}{rgb} {0,0.08,0.45} 
\definecolor{darkgreen}{rgb}{0.00, 0.49, 0.00}
\definecolor{Mahog}{rgb}{0.6,  0.,   0}
\newcommand{\R}{I\!\!R}
\numberwithin{equation}{section}
\begin{document}

\title{Asymptotic analysis of a Schr\"odinger-Poisson system with
quantum wells and macroscopic nonlinearities in dimension 1}
\author{A. Faraj\thanks{IRMAR, UMR - CNRS 6625, Universit\'{e} de Rennes 1, 35042 Rennes Cedex, France.}}
\date{}
\maketitle

\begin{abstract}
We consider the stationary one dimensional Schr\"{o}dinger-Poisson system on a bounded interval with a background
potential describing a quantum well. Using a partition function which forces the particles
to remain in the quantum well, the limit $h\rightarrow0$ in the nonlinear
system leads to a uniquely solved nonlinear problem with concentrated particle density. It allows to conclude about the convergence of the solution.
\keywords{Schr\"{o}dinger-Poisson system; Asymptotic analysis; Semi-classical analysis; Spectral theory.}\\
{\bf Subject classifications:} 35Q02, 35Q05, 35Q10, 35B40.
\end{abstract}

\section{Introduction}\label{mod_limSP}

The quantum state of a gas of charged particles is described, in the mean
field approximation, by a nonlinear one-particle Schr\"{o}dinger equation
where the electrostatic repulsion is modeled by a nonlinear potential term
depending on the charge density through a Poisson equation. This class of
models is usually referred to as Schr\"{o}dinger-Poisson systems. In this work
we consider a stationary Schr\"{o}dinger-Poisson system in a bounded region of
$\mathbb{R}$, for which a background potential models a quantum
well, while the nonlinear potential extends on a wider scale. After
introducing a rescaling for which the small parameter $h>0$ represents an
inverse length scale, the support of the potential well squeezes
asymptotically to a single point in the limit $h\rightarrow0$. An equilibrium
state of a gas of charged particles confined in the quantum well will be
considered, while the nonlinear electrostatic potential created by such a
concentrated charge tends to a potential which is picewise linear and almost constant in the well.

Such a Schr\"{o}dinger-Poisson problem has recently been considered in
\cite{BNP1}, \cite{BNP2} and \cite{Ni2} in a more complex setting involving far from equilibrium steady states. This
one-dimensional analysis leads to a reduced model which happens to be very
efficient in the numerical simulation of the electronic transport through
semiconductor heterostructures, like resonant tunneling diodes, see \cite{BFN} and \cite{BNP3}. Nevertheless, in the present work the Schr\"odinger-Poisson system has a unique solution and the analysis shows that the unicity is asymptotically preserved. It is not the case in \cite{BNP1}, \cite{BNP2} and \cite{Ni2} where hysteresis phenomena are predicted (e.g. in \cite{JLPS} and
\cite{PrSj}).

For the sake of simplicity we shall use a low energy-filter in the definition
of the partition function $f$ (see equation (\ref{source}) below), that is the
quantum states with an energy larger than the threshold $\varepsilon_{S}$ are
not occupied. With such an assumption only the quantum states confined in the
well have an effect on the nonlinearity. This is an important point to get asymptotically the macroscopic quantities.

The semi-classical analysis of such a model was performed in \cite{FMN} in dimension $d=2$ and $3$ and it appears that in the limit, the potential vanishes almost everywhere and produces a non null spectral perturbation. This apparent contradiction is solved through a rescaling with parameter $h$ and the potential behaviour depends on the Green function of the Laplace operator. For the $1D$ problem, the nonlinear effect remains visible at the macroscopic scale in the limit $h\rightarrow0$, this provides a non trivial approximation of the solution of the Schr\"odinger-Poisson system. 

Like in \cite{BNP1}, \cite{BNP2}, \cite{FMN} and \cite{Ni2} the analysis will be a mixture
of nonlinear apriori estimates combined with accurate semiclassical and
spectral techniques (we refer to: \cite{DiSj}, \cite{He}, \cite{HeSj} and \cite{ReSi}) adapted for potentials with limited regularity. The outline of
this analysis is the following. We end this section by introducing the model and by stating our results.
In Section \ref{sec_limlin}, we give some asymptotics for the spectrum of the linear Hamiltonian. In Section \ref{sec_premres}, we present preliminary results for the Schr\"odinger-Poisson system. The limit of the Schr\"odinger-Poisson system is done in Section \ref{sec_limSPd=1}. The semi-classical analysis tools, necessary for the asymptotics, are given in Appendix \ref{sec_Agmdist} and \ref{sec_expdec}.

\subsection{The model}

Let $\Omega=(0,L)$ be an open bounded interval and $U$ a
non positive function in $C_{0}^{\infty}(\mathbb{R})$ supported in the
ball of radius one centered in the origin. 

For $x_{0}\in\Omega$, we define the potential with center $x_{0}$ and radius
of order $h>0$%
\[
\displaystyle U^{h}(x)=U\left(  \frac{x-x_{0}}{h}\right)  ,\quad x\in\Omega\,.
\]
Our analysis being concerned with the limit $h\rightarrow0$, we can choose,
without loss of generality, $h$ small enough so that the support of $U^{h}$ is
included in $\Omega$. In particular, defining with $\omega^{h}$
the support of $U^{h}$, we assume that $\omega
^{h}\subset\Omega$ for all values of $h$ below a suitable positive constant:
$h\leq h_{0}$.

Next we assign the function $f\in C^{\infty}(\mathbb{R})$, with a threshold at
$\varepsilon_{S}<0$ and fulfilling the conditions%
\begin{equation}
\displaystyle f(x)>0,\quad\forall x<\varepsilon_{S}\,, \label{condition 1}%
\end{equation}%
\begin{equation}
\displaystyle f(x)=0,\quad\forall x\geq\varepsilon_{S}\,, \label{condition 2}%
\end{equation}%
\begin{equation}
\displaystyle f^{\prime}(x)\leq0,\quad\forall x\in\mathbb{R\,,} \label{condDec}%
\end{equation}
and address, for $h\in\left(  0,h_{0}\right]  $, the following problem: find
$V^{h}$ solving the nonlinear Poisson equation%
\begin{equation}
\left\{
\begin{array}
[c]{l}%
\displaystyle-\frac{d^2}{dx^2} V^{h}=n[V^{h}]\quad\text{in}\ \Omega\\[1.65mm]
\displaystyle\left.  V^{h}\right\vert _{\partial\Omega}=0
\end{array}
\right.  \label{Poiss}%
\end{equation}
where the source term is%
\begin{equation}
\displaystyle n[V^{h}]=\sum_{i\geq 1}f(\varepsilon_{i}^{h})|\Psi_{i}^{h}|^{2}\,,
\label{source}%
\end{equation}
and $\left\{  \varepsilon_{i}^{h}\right\}  _{i\geq 1}$ are the
eigenvalues of the nonlinear Hamiltonian%
\begin{equation}
\displaystyle H^{h}=-h^{2}\frac{d^2}{dx^2}+U^{h}+V^{h}%
\end{equation}
numerated from $\inf\sigma(H^{h})$ counting multiplicities, while $\left\{
\Psi_{i}^{h}\right\}  _{i\geq 1}$ are the corresponding eigenvectors%
\begin{equation}
\left\{
\begin{array}
[c]{l}%
\displaystyle H^{h}\Psi_{i}^{h}=\varepsilon_{i}^{h}\Psi_{i}^{h},\quad\text{in}\ \Omega\,,\\[1.65mm]
\displaystyle \left.  \Psi_{i}^{h}\right\vert _{\partial\Omega}=0\,.
\end{array}
\right.  \label{Schrod}%
\end{equation}
The equations (\ref{Poiss}), (\ref{source}) and (\ref{Schrod}) define the
stationary Schr\"{o}dinger-Poisson system associated with the potential well
$U^{h}$ and the function $f$. In practical applications, where these equations
are used for the description of the charge distribution in electronic devices,
$n[V^{h}]$ describes the density of the charge careers of the system, while
$f$ is a response function which depends on the characteristics of the device
and has to be considered as a data item of the problem.

The stationary states form a set of
real normalized functions%
\begin{equation}
\displaystyle \operatorname{Im}\Psi_{i}^{h}=0;\left\Vert \Psi_{i}^{h}\right\Vert
_{L^{2}(\Omega)}=1\,, \label{condition eigenvectors 1}%
\end{equation}

The analysis of our Schr\"{o}dinger-Poisson system, will involve the operator
\begin{equation}
\displaystyle H_{0}=-\frac{d^2}{dx^2}+U;\quad D(H_{0})=H^{2}(\mathbb{R})\,, \label{H0}%
\end{equation}
whose point spectrum, $\sigma_{p}(H_{0})$, contains a finite number of points
embedded in $\left[  -\left\Vert U\right\Vert _{L^{\infty}},0\right)  $. In
particular, we make the following assumption
\begin{equation}
\displaystyle e_{1}:=\inf\sigma\left(  H_{0}\right)  <\varepsilon_{S}\ . \label{condition 4}%
\end{equation}
The hypothesis (\ref{condition 4}) -- which prevents the
solution to (\ref{Poiss}) - (\ref{Schrod}) to be trivial -- will be
extensively used in this work.

\subsection{Results}

In our one dimensional case (see \cite{Ni0}) and in the general case of the dimension $d\leq 3$ (see \cite{Ni1}), it has been proved that,
for $h>0$, the problem (\ref{Poiss}) - (\ref{Schrod}) admits an
unique solution, $V^{h}$ in our notation, in $H_{0}^{1}(\Omega)$. However, the
uniqueness of $V^{h}$ is by far not obvious if the limit $h\rightarrow0$ is
considered. The aim of our analysis is to understand the asymptotic
behaviour of the system (\ref{Poiss}) - (\ref{Schrod}) as $h\rightarrow0$. This in order to provide a simplified
modelling for the nonlinearities produced by charged particles confined in
quantum layers. Such a program has
been carried out in \cite{FMN} in dimension $d=2$ and $3$. 

In our one dimensional framework, the functional spaces for the solution verify better Sobolev injections than in the $2D$ and $3D$ case, which gives, up to extraction, strong convergence results for the potential (see \cite{BNP1} and \cite{BNP2}). We have then enough regularity to obtain the convergence of the negative spectrum. This allows to determine the limit of the Shr\"odinger-Poisson system (\ref{Poiss}) - (\ref{Schrod}), up to extraction, and the convergence of all the sequence follows from the uniqueness, and the explicit computation, of the solution of the limit problem.

Our main results below, whose proofs are given in Section \ref{sec_limSPd=1}, gather the
asymptotic informations for the $1D$ case.

\begin{theorem}\label{asSP}
The nonlinearity $(V^{h})_{h\in(0,h_0]}$ is bounded in $W^{1,\infty}(0,L)$ and tends, strongly in $C^{0,\alpha}(0,L)$, $\forall \alpha\in(0,1)$, to the potential $V_{0}$ given by
\begin{equation}\label{V0expl}
V_{0}(x)=\left\{
\begin{array}
[c]{l}%
\displaystyle\left(\sum_{i\geq1}f(e_{i}+\theta)\right)(1-\frac{x_{0}}{L})x,\quad0<x\leq x_{0}\\[1.65mm]
\displaystyle\left(\sum_{i\geq1}f(e_{i}+\theta)\right)\frac{x_{0}}{L}(L-x),\quad x_{0}<x<L
\end{array}
\right.
\end{equation}
where $\{e_i\}_{1\leq i\leq N}$ is the discrete spectrum of $H_0$, $e_{i\geq N+1}=0$ and $\theta\in(0,\varepsilon_S-e_1)$ is the unique non negative solution of the nonlinear equation $\displaystyle\theta
=x_{0}(1-\frac{x_{0}}{L})\sum_{i\geq 1}f(e_{i}+\theta)$.\\
The density $(n[V^{h}])_{h\in(0,h_0]}$ tends to the mesure 
\begin{equation}\label{muexpl}
\displaystyle\mu = \sum_{i\geq 1}f(e_{i}+\theta)\delta_{x_0}.
\end{equation}
for the weak* topology on the space $\mathcal{M}_b(0,L)$ of bounded mesures on $(0,L)$.
\end{theorem}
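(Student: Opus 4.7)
The plan is to combine uniform a priori estimates on the nonlinearity, compactness, and the semi-classical spectral tools developed in Section~\ref{sec_limlin} and in the appendices, to identify a limit problem whose unique solution is $V_0$; the uniqueness of the limit will then upgrade convergence along subsequences to convergence of the whole family.

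I would first establish that $(V^h)_{h\in(0,h_0]}$ is bounded in $W^{1,\infty}(0,L)$. Since $-(V^h)''=n[V^h]\ge 0$ with Dirichlet data, $V^h$ is non-negative and concave, and the Green function of $-\partial_x^2$ on $(0,L)$ with Dirichlet conditions is Lipschitz, so that $\|V^h\|_{W^{1,\infty}}\lesssim\|n[V^h]\|_{L^1(\Omega)}=\sum_{i\ge 1}f(\varepsilon_i^h)$. The low-energy filter (\ref{condition 2}) kills all eigenvalues $\varepsilon_i^h\ge\varepsilon_S$, and the min--max comparison $H^h\ge -h^2\partial_x^2+U^h$ rescaled to the whole line (using the preliminary estimates of Section~\ref{sec_premres}) bounds the number of eigenvalues of $H^h$ below $\varepsilon_S$ uniformly in $h$. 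By Arzel\`a--Ascoli one extracts a subsequence $h_k\to 0$ with $V^{h_k}\to V$ in $C^{0,\alpha}(0,L)$ for every $\alpha\in(0,1)$.

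Next I would identify the limit of the source term via the zoom $y=(x-x_0)/h$, which conjugates $H^h$ with $-\partial_y^2+U(y)+V^h(x_0+hy)$ on a dilating interval. Since $V^h\to V$ uniformly, one has $V^h(x_0+hy)\to\theta:=V(x_0)$ locally uniformly. The spectral convergence results of Section~\ref{sec_limlin} together with the Agmon-type exponential decay of Appendices~\ref{sec_Agmdist}--\ref{sec_expdec} then give that the eigenvalues of $H^h$ below $\varepsilon_S$ converge to $e_i+\theta$ for the relevant $i\le N$ (the remaining eigenvalues escaping above $\varepsilon_S$), and that $|\Psi_i^h|^2$ concentrates on the microscopic scale $h$ around $x_0$, whence $|\Psi_i^h|^2\rightharpoonup^*\delta_{x_0}$ in $\mathcal{M}_b(0,L)$. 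Continuity of $f$ yields $n[V^{h_k}]\rightharpoonup^*\mu:=\bigl(\sum_{i\ge 1}f(e_i+\theta)\bigr)\delta_{x_0}$. Passing to the limit in (\ref{Poiss}) in the distributional sense then identifies $V$ as the solution of $-V''=\mu$ with Dirichlet data, i.e.\ the piecewise linear function (\ref{V0expl}). Evaluating at $x_0$ produces the fixed-point equation $\theta=x_0(1-x_0/L)\sum_{i\ge 1}f(e_i+\theta)$; by (\ref{condDec}) the map $\theta\mapsto x_0(1-x_0/L)\sum_{i\ge 1}f(e_i+\theta)-\theta$ is strictly decreasing, strictly positive at $0$ by (\ref{condition 4}) and strictly negative at $\varepsilon_S-e_1$ (where each $f(e_i+\theta)$ vanishes since $e_i\ge e_1$ and $e_i=0$ for $i\ge N+1$), so $\theta\in(0,\varepsilon_S-e_1)$ is unique. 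Uniqueness of the pair $(V_0,\mu)$ removes the need for extraction and yields convergence of the whole family.

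The main obstacle I expect is the spectral identification step. Because $V^h$ is only uniformly $C^{0,\alpha}$, classical smooth-potential semi-classical expansions are not available; quasi-modes have to be built from the eigenvectors of $H_0$ rescaled and cut off near $x_0$, and Agmon estimates must serve both to show that these quasi-modes exhaust the low-lying spectrum of $H^h$ (so no eigenvalue is missed in the sum defining $n[V^h]$) and to control the exponential leakage of $|\Psi_i^h|^2$ outside a neighbourhood of $x_0$ of size $O(h)$. The ingredients collected in the appendices are precisely designed for this purpose.
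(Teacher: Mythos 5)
Your proposal is correct and follows essentially the same route as the paper: a priori $W^{1,\infty}$ bound (you via the Lipschitz Green function, the paper via the $H_0^1$ variational bound plus the $BV_0^2\hookrightarrow W^{1,\infty}$ embedding, a minor variant), compactness and subsequence extraction, spectral convergence of $H^h$ to the rescaled $H_0+\theta$ via Agmon-weighted quasi-modes and the Helffer--Sj\"ostrand distance estimate, concentration $|\Psi_i^h|^2\rightharpoonup^*\delta_{x_0}$, passage to the limit in the Poisson equation, explicit solution of the resulting transmission problem, and uniqueness of the fixed point $\theta$ to upgrade from subsequential to full convergence. The step you flag as the main obstacle---showing the quasi-modes exhaust the low-lying spectrum of $H^h$ so no eigenvalue is missed---is exactly what the paper handles in Lemma~\ref{cvSP_NL} via the projection argument of Proposition~2.5 and Lemma~1.3 in \cite{HeSj}.
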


Hence for the $1D$ problem, the nonlinear effect produced at the quantum scale
remains visible at the macroscopic scale in the limit $h\rightarrow0$.

\section{Some asymptotics for the linear operator}\label{sec_limlin}

Consider the operator $H_0$ defined in (\ref{H0}). The multiplication operator $U$ is a relatively compact perturbation of $-\frac{d^2}{dx^2}$. Then, the Kato-Reillich and Weyl Theorems imply that $H_0$ is self-adjoint and $\sigma_{ess}(H_0)=[0,+\infty)$.\\
The reader may refer to Proposition 7.4 in \cite{Si} to see that $\sigma_{p}(H_{0})\neq\varnothing$ is always true in the one dimensional case when $U\leq0$ and not identically
zero, and the discret spectrum of $H_0$ is a countable set of negative eigenvalues with multiplicity one and zero as possible accumulation point. In addition, the Proposition 7.5 in \cite{Si} gives the following estimation on the number $N$ of negative eigenvalues of $H_0$:
\[
\displaystyle N \leq 1 + \int_{\R}|x||U(x)|dx,
\]
bound which is finite in our case. It follows that
\begin{equation}\label{sp_H0}
\displaystyle\sigma(H_0)=\{e_1,...,e_N\}\cup[0,+\infty)
\end{equation}
where the $e_i$ are the points of $\sigma_d(H_0)$. We will work with the convention 
\begin{equation}\label{conv_eH0}
\displaystyle e_i=0, \quad \textrm{ for } i\geq N+1
\end{equation}
The present section is devoted to the description, when $h\rightarrow 0$, of the spectrum of the operator
\begin{equation}
\displaystyle H_{0}^{h}=-h^{2}\frac{d^2}{dx^2}+U^{h}\,,\quad D(H_{0}^{h})=H^{2}\cap H_{0}^{1}(\Omega)
\label{H0_h}%
\end{equation}
using a comparison with the spectrum of $H_0$. The operator $H_0^h$ is the linear part of the Hamiltonian $H^h$ involved in the Schr\"odinger-Poisson system (\ref{Poiss}) - (\ref{Schrod}) and, therefore, it will play an important role in the analysis: the spectral asymptotics of the linear operator will give information about the nonlinear one.\\
We will denote by $(-h^2\frac{d^2}{dx^2})_{\Omega}$ the realisation of $-h^2\frac{d^2}{dx^2}$ on $\Omega$ with domain $H^1_0\cap H^2(\Omega)$. The spectrum of the operator $(-h^2\frac{d^2}{dx^2})_{\Omega}$ is the sequence of the $\lambda_i^h = \frac{h^2i^2\pi^2}{L^2}$ and the corresponding normalized eigenvectors are the $\varphi_i(x)=\sqrt{\frac{2}{L}}\sin\frac{i\pi x}{L}$ for $i\geq 1$.\\
By setting
\[
\displaystyle\Lambda := \left\Vert U\right\Vert _{L^{\infty}} \, > \, 0.
\]
we have:
\[
\displaystyle\sigma((-h^2\frac{d^2}{dx^2}-\Lambda)_{\Omega}) = \{\lambda_i^h-\Lambda, \, i\geq 1\}
\]
Let $\varepsilon$ be a constant such that $\varepsilon>-\Lambda$, we have:
\[
\displaystyle\#\left(\sigma((-h^2\frac{d^2}{dx^2}-\Lambda)_{\Omega})\cap(-\infty,\varepsilon]\right)=\#\{i\geq 1;\,\frac{h^2i^2\pi^2}{L^2}-\Lambda \leq \varepsilon\}\leq \sqrt{\varepsilon+\Lambda}\frac{L}{\pi h}
\]
Coming back to the operator $H_0^h$, we note that:
\[
\displaystyle H_0^h \geq (-h^2\frac{d^2}{dx^2}-\Lambda)_{\Omega}
\]
and therefore $\forall \varepsilon > -\Lambda$, the integer
\begin{equation}\label{eqNh}
\displaystyle N^h := \#\left(\sigma(H_0^h)\cap(-\infty,\varepsilon]\right)
\end{equation}
verifies
\begin{equation}\label{eq0nb}
\displaystyle N^h \leq \#\left(\sigma((-h^2\frac{d^2}{dx^2}-\Lambda)_{\Omega})\cap(-\infty,\varepsilon]\right)=\mathcal{O}(h^{-1}).
\end{equation}
The previous asymptotic order is an important point in the proof of the following Lemma, which is the main result of this section.
\begin{lemma}\label{cv_H0bnb}
Consider $\varepsilon \in (e_N,0)$ and $N^h$ given by (\ref{eqNh}). If we define $e_i^h$ to be the eigenvalues of $H_0^h$, then, for $h_0$ small enough, we have:
\begin{itemize}
\item[$\bullet$]$N^h=N$, $\forall h\in(0,h_0]$
\item[$\bullet$]For $i=1,...,N$:
\[
\lim_{h\rightarrow 0} e_i^h = e_i
\]
\end{itemize}
\end{lemma}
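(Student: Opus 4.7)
The plan rests on the unitary rescaling $y=(x-x_0)/h$, $\psi(x)=h^{-1/2}\widetilde\psi((x-x_0)/h)$, which conjugates $H_0^h$ on $\Omega$ to $\widetilde H_0^h:=-\frac{d^2}{dy^2}+U$ acting on the expanding interval $I_h:=(-x_0/h,(L-x_0)/h)$ with Dirichlet boundary conditions. Since the spectra of $H_0^h$ and $\widetilde H_0^h$ coincide, the problem reduces to comparing the low-lying eigenvalues of $\widetilde H_0^h$ with those of $H_0$ on $\R$ as $I_h\uparrow\R$.

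For the upper bound, I would apply the min-max principle with trial functions $\chi_h\varphi_i$, where $\varphi_1,\ldots,\varphi_N$ are the genuine eigenfunctions of $H_0$ and $\chi_h$ is a smooth cutoff equal to $1$ on $[-c_0/h,c_0/h]$ (with $c_0=\tfrac{1}{3}\min(x_0,L-x_0)$) and supported inside $I_h$. Thanks to the exponential-decay estimates of Appendix \ref{sec_expdec} applied to $\varphi_i$, the Gram matrix and the matrix of $\widetilde H_0^h$ on the span of the $\chi_h\varphi_i$ are $I_N+\mathcal{O}(e^{-c/h})$ and $\operatorname{diag}(e_1,\ldots,e_N)+\mathcal{O}(e^{-c/h})$ respectively (the off-diagonal and perturbation terms coming from the commutators $-2\chi_h'\varphi_i'-\chi_h''\varphi_i$, all supported where $\varphi_i$ is exponentially small). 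By min-max this gives $e_i^h\le e_i+\mathcal{O}(e^{-c/h})$ for $i=1,\ldots,N$, and since $e_N<\varepsilon$ already yields $N^h\ge N$ for $h$ small.

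The counterpart $N^h\le N$ is the heart of the argument. Let $(\widetilde\psi_j^h)_{j=1,\ldots,N^h}$ be an $L^2$-orthonormal family of eigenfunctions of $\widetilde H_0^h$ with eigenvalues $\lambda_j^h\le\varepsilon<0$. Outside the fixed compact support of $U$ the equation becomes $-(\widetilde\psi_j^h)''=\lambda_j^h\widetilde\psi_j^h$ with $|\lambda_j^h|\ge|\varepsilon|$, so the Agmon framework of Appendix \ref{sec_Agmdist} delivers a uniform-in-$h$ bound $|\widetilde\psi_j^h(y)|\le Ce^{-\delta|y|}$ with $\delta=\sqrt{|\varepsilon|}/2$. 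Extended by zero to $\R$, the family is tight in $L^2(\R)$; together with local $H^1$ bounds from the eigenvalue equation and Rellich's theorem, one extracts a subsequence along which each $\widetilde\psi_j^h$ converges strongly in $L^2(\R)$ to a unit vector $\psi_j$, preserving orthonormality. The limits solve $H_0\psi_j=\lambda_j\psi_j$ with $\lambda_j=\lim\lambda_j^h\le\varepsilon<0$, hence $\lambda_j\in\{e_1,\ldots,e_N\}$. If $N^h\ge N+1$ held along a subsequence, one would obtain $N+1$ orthonormal vectors lying in the $N$-dimensional discrete-eigenspace of $H_0$ (each $e_i$ is simple by Proposition 7.4 of \cite{Si}), a contradiction. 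Hence $N^h=N$ for $h$ small.

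The individual convergence $e_i^h\to e_i$ then follows by applying the same extraction to the full family $(\widetilde\psi_1^h,\ldots,\widetilde\psi_N^h)$: the $N$ strong limits form an orthonormal basis of the discrete eigenspace of $H_0$ with eigenvalues drawn from $\{e_1,\ldots,e_N\}$, and the matching upper bound from the min-max step locks the ordering, forcing $\lim e_i^h=e_i$ for each $i$. The main technical obstacle is precisely the uniform-in-$h$ exponential decay needed in the compactness step; without it, eigenfunctions could leak to infinity as $I_h\uparrow\R$ and a limit of unit norm could fail to exist, invalidating the counting argument.
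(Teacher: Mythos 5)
Your argument is correct, but the crucial counting bound $N^h\le N$ is obtained by a genuinely different route than the paper's. Both proofs start from the same rescaling (the paper passes to $\tilde H_0^h$ on $L^2(\mathbb{R})$, you to $-d^2/dy^2+U$ on the dilating Dirichlet interval $I_h$) and both get $N^h\ge N$ and the exponential upper bound $e_i^h\le e_i+\mathcal{O}(e^{-c/h})$ from truncated eigenfunctions of $H_0$; your min-max phrasing is equivalent to the paper's quasimode plus spectral-distance inequality. For $N^h\le N$ the paper follows the Helffer--Sj\"ostrand ``multiple wells'' machinery: it estimates $\|(\tilde H_0^h-e_i^h)v_i^h\|=\mathcal{O}(e^{-\gamma/h})$ for $v_i^h=\chi\phi_i^h$, controls the subspace distance $d(E,F)$ via Proposition 2.5 and Lemma 1.3 of [HeSj], and in that estimate it crucially needs the preliminary bound $N^h=\mathcal{O}(h^{-1})$ of (2.7) since $d(E,F)\lesssim (N^h)^{1/2}e^{-\gamma/h}$. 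You instead run a compactness argument: uniform-in-$h$ Agmon decay (in the rescaled variable $\|e^{c_0|y|}\widetilde\psi_j^h\|_{L^2}\le C$) gives tightness, the energy identity gives a uniform $H^1(\mathbb{R})$ bound, Rellich plus tightness gives strong $L^2(\mathbb{R})$ convergence along subsequences, and the limits are orthonormal eigenfunctions of $H_0$ with eigenvalues $\le\varepsilon<0$; since the discrete eigenspace is $N$-dimensional (each $e_i$ simple), $N^h\le N$. Your route is more elementary, avoids the $N^h=\mathcal{O}(h^{-1})$ input and the spectral-subspace lemmas entirely, while the paper's route yields exponential precision and, more importantly, is the template reused verbatim in the nonlinear Lemma \ref{cvSP_NL}, where the residual $V^h-\theta$ decays only like $o(1)$ on $B(x_0,h\ln\frac{1}{h})$. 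One small imprecision: you invoke a pointwise bound $|\widetilde\psi_j^h(y)|\le Ce^{-\delta|y|}$, whereas Appendix \ref{sec_expdec} only gives weighted $L^2$ (and gradient) estimates; in 1D a Sobolev-embedding step upgrades these to pointwise decay, or one can just use the $L^2$ estimate directly for tightness, so this is easily repaired and does not affect the conclusion.
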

\begin{remark}
\textrm{A consequence of this Lemma is: 
\[
\displaystyle\liminf_{h \rightarrow 0} e_i^h \geq 0
\]
for $i\geq N+1$. In other words, the eigenvalues of $H_0^h$, with number $i\geq N+1$, are asymptotically embedded in the continuous spectrum of $H_0$.}
\end{remark}

The proof of this lemma is based on the exponential decay of eigenfunctions given in Appendix \ref{sec_expdec}. The estimates are written with weight functions involving Agmon distances which proprieties are recalled in Appendix \ref{sec_Agmdist}.

\begin{proof}
The states corresponding to energies below $\varepsilon$ are exponentialy decaying outside the support $\omega^h$ of $U^h$, therefore the behaviour of the eigenvalues of $H_0^h$ is well discribed by the comparison with the operator considered on the whole space, $\tilde{H}_0^h$, defined as follows:
\begin{equation}
\displaystyle \tilde{H}_0^h=-h^2\frac{d^2}{dx^2}+U^h;\quad D(\tilde{H}_0^h)=H^{2}(\mathbb{R})
\end{equation}
The operator $\tilde{H}_0^h$ is unitary equivalent to the operator $H_0$ through the unitary map on $L^2(\mathbb{R})$:
\[
\displaystyle \tilde{\phi}(x)=h^{\frac{1}{2}}\phi(hx+x_0)
\]
and \eqref{sp_H0} implies that:
\[
\displaystyle \sigma(\tilde{H}_0^h)=\{e_1,...,e_N\}\cup[0,+\infty)
\]
We will denote by $\phi_i^h$ and $\tilde{\phi}_i^h$ the normalized eigenvectors of $H_0^h$ and $\tilde{H}_0^h$ respectively.

\noindent$\bullet~$ Introduce the family: $u_i^h\in D(H_0^h)$ defined by
\begin{equation}\label{eq.defui}
\displaystyle u_i^h=\chi\tilde{\phi}_i^h, \quad \textrm{ for } i=1,...,N
\end{equation}
 where $\chi\in C_{0}^{\infty}(\mathbb{R})$ is such that: $\chi=1$ on
$B(x_0,\frac{R}{2})$ and $\chi=0$ on $\mathbb{R}\backslash B(x_0,R)$ for a raduis $R>0$ verifying $B(x_0,R)\subset\Omega$.\\
We have the following classical inequality for self-adjoint operators, which can be found in \cite{He}: 
\[
\displaystyle ||u_{i}^{h}||_{L^{2}(\Omega)}d(e_{i},\sigma(H_0^h))\leq||(H_0^h-e_{i})u_{i}^{h}||_{L^{2}(\Omega)}\,. 
\]
We refer to \cite{DiSj} to see that estimation (\ref{Agmon estimates 1}) can be extended to the whole space where the weight function is given by the Agmon distance on $\R$ related to the potential $(U^h-\varepsilon)$. This, combined with inequality (\ref{compDist}) for $\Omega=\R$, gives
\begin{equation}\label{dectoutesp}
\displaystyle h^2\int_{\mathbb{R}}\,\left\vert e^{c_0\frac{|x-x_0|}{h}}\nabla\tilde{\phi}_{i}^h\right\vert ^{2}\,dx + \int_{\mathbb{R}}\,\left\vert e^{c_0\frac{|x-x_0|}{h}}\tilde{\phi}_{i}^h\right\vert ^{2}\,dx\leq C 
\end{equation}
and it follows that $||u_{i}^{h}||_{L^{2}(\Omega)}$ is bounded from below. Indeed, we have:
\begin{align*}
\displaystyle ||u_{i}^{h}||_{L^{2}(\Omega)}^2 &= \int_{B(x_0,R/2)}|\tilde{\phi}_i^h|^2dx + \int_{\R\setminus B(x_0,R/2)}|\chi|^2|\tilde{\phi}_i^h|^2dx\\[1.5mm]
\displaystyle &= \int_{\R}|\tilde{\phi}_i^h|^2dx + \int_{\R\setminus B(x_0,R/2)}(|\chi|^2-1)|\tilde{\phi}_i^h|^2dx = 1 + \int_{\R\setminus B(x_0,R/2)}(|\chi|^2-1)|\tilde{\phi}_i^h|^2dx
\end{align*}
And, the decay estimate (\ref{dectoutesp}) implies that 
\begin{align*}
\displaystyle \left|\int_{\R\setminus B(x_0,R/2)}(|\chi|^2-1)|\tilde{\phi}_i^h|^2dx\right|&\leq (||\chi||_{L^{\infty}}^2+1)\int_{\R\setminus B(x_0,R/2)}e^{-2c_0|x-x_0|/h}|e^{c_0|x-x_0|/h}\tilde{\phi}_i^h|^2dx\\[1.5mm]
\displaystyle &\leq C\;\int_{\R}|e^{c_0|x-x_0|/h}\tilde{\phi}_i^h|^2dx\;e^{-c_0R/h} \leq C'e^{-c_0R/h}
\end{align*}
which leads to 
\[
\displaystyle ||u_{i}^{h}||_{L^{2}(\Omega)}\geq\frac{1}{2}, \quad \forall h\in(0,h_0]
\] 
for $h_0$ small enough. Then, a direct computation gives:
\[
\displaystyle (H_0^h-e_{i})u_{i}^{h}=-h^2\chi''\tilde{\phi}_{i}^{h}-2h^2\chi'(\tilde{\phi}_{i}^{h})'.
\]
The r.h.s. in the previous equality being supported in a region where $\tilde{\phi}_i^h$ is exponentially decaying, the estimate (\ref{dectoutesp}) gives:
\begin{equation}\label{eq.reH0}
\displaystyle ||(H_0^h-e_{i})u_{i}^{h}||_{L^2(\Omega)}\leq Ce^{-\frac{\gamma}{h}}
\end{equation}
where $C, \gamma > 0$ doesn't depend on $i$ and $h$.\\
We deduce that for $i=1,...,N$
\[
\displaystyle d(e_{i},\sigma(H_0^h)) \underset{h\rightarrow 0}{\longrightarrow}0
\]
It implies $N^h\geq N$ for all $h\in(0,h_0]$ and $h_0$ small. Then, it's enough to show that $N^h\leq N$ to conclude.

\noindent$\bullet~$ For $i=1,...,N^h$, if we set $v_i^{h}=\chi\phi_i^h\in D(\tilde{H}_0^h)$ using (\ref{Agmon estimates 2}), we can reproduce the calculation of the first point to obtain
\begin{equation}\label{eq_rih}
\displaystyle ||(\tilde{H}_0^h-e_i^h)v_i^{h}||_{L^2(\R)}\leq Ce^{-\frac{\gamma}{h}}
\end{equation}
Now, let $I=[-||U||_{L^{\infty}},\varepsilon]$ and $a>0$ be small enough to have
\[
\displaystyle \sigma(\tilde{H}_0^h)\cap\left(  (I+B(0,2a))\backslash I\right)  =\varnothing\,.
\]
We consider the vector space $E$ spaned by $v_{1}^{h},...,v_{N^h}^{h}$ and the spectral subspace $F$ corresponding to $\sigma(\tilde{H}_0^h)\cap I$. Estimation (\ref{Agmon estimates 2}) implies that the matrix $M=((v_{i}^{h},v_{j}^{h})_{L^{2}(\mathbb{R})})_{1\leq i,j\leq N^h}$ verifies
\begin{equation}\label{apId}
\displaystyle M=I+\mathcal{O}(e^{-\frac{\gamma}{h}})
\end{equation}
when $h\rightarrow0$. As it will be clarified in Remark \ref{linInd}, equation (\ref{apId}) implies that the $v_{i}^{h}$ are linearly independant. If we consider in addition equation (\ref{eq_rih}), the conditions for the application of Proposition 2.5 in \cite{HeSj} are gathered and the distance $d(E,F)$ (definition given in
\cite{HeSj}) is estimated by
\[
\displaystyle d(E,F) \leq \left(  \frac{N^h}{\lambda_{min}}\right)^{\frac{1}{2}}%
\frac{Ce^{-\frac{\gamma}{h}}}{a}
\]
where $\lambda_{min}$ is the smallest eigenvalue of $M$. We deduce from (\ref{apId}) that $\lambda_{min}=1+o(1)$, and using (\ref{eq0nb}), we get:
\[
\displaystyle d(E,F)\leq C (N^h)^{\frac{1}{2}}e^{-\frac{\gamma}{h}} \leq Ch^{-\frac{1}{2}}e^{-\frac{\gamma}{h}} \leq \frac{1}{2}
\]
for all $h\in(0,h_0]$ and $h_0$ small enough. This last condition allows us to
state that the projection $\left.  \Pi_{F}\right\vert _{E}:E\rightarrow F$ is
injective (e.g. in Lemma 1.3 in \cite{HeSj}), from which the condition $N^h\leq N$ follows.
\end{proof}

\begin{remark}\label{linInd}
\textrm{The matrix $M:=((v_i^h,v_j^h))$ verifies $M=I+\mathcal{O}(e^{-\frac{\gamma}{h}})$ for a constant $\gamma>0$ where the asymptotics $\mathcal{O}$ is considered with respect to the norm 
\[
\displaystyle ||A||_{\Delta}=\max_{i,j}|a_{ij}|
\]
This implies $M\in GL_{N^h}(\mathbb{C})$ and the linear independence of the $v_i^h$. Indeed, remarking that (see in \cite{GoVa}) 
\[
\displaystyle \forall A\in\mathbb{C}^{N^h\times N^h}, \quad ||A||_2\leq N^h||A||_{\Delta}
\]
where
\[
\displaystyle ||A||_2 = \sup_{x\neq 0}\frac{||Ax||_2}{||x||_2}, \quad ||x||_2=(\sum_ix_i^2)^{\frac{1}{2}}
\]
we obtain $||M-I||_{2}\leq Ch^{-1}e^{-\frac{\gamma}{h}}<1$, $\forall h\in(0,h_0]$ for $h_0$ small. The inequality $||AB||_{2}\leq ||A||_{2}||B||_{2}$ implies that the sequence $S:=\sum_{n\geq 0}(-1)^n(M-I)^n$ converges and verifies:
\[
\displaystyle SM=MS=I
\]
and the matrix $M$ is invertible. Now, suppose there exists $\lambda=(\lambda_i)_i$ such that $\sum_i\lambda_iv_i^h=0$, then $\lambda^TM=0$ and $\lambda=0$.\\
The asymptotics $\lambda_{min}=1+o(1)$, where $\lambda_{min}$ is the smallest eigenvalue of $M$, is given by the perturbation estimate for eigenvalues
\[
\displaystyle |\lambda_n(M)-1|\leq ||M-I||_2, \quad  \sigma(M)=\{\lambda_n(M),~ 1\leq n \leq N^h\}
\]
which can be found in \cite{GoVa}. As $||M-I||_2\leq N^h||M-I||_{\Delta}\leq C e^{-\frac{\gamma}{2h}}$, $\forall h\in(0,h_0]$, we get:
\[
\displaystyle \lambda_{min}=1+\mathcal{O}(e^{-\frac{\gamma}{2h}})
\]
}
\end{remark}

\section{Preliminary results for the Schr\"odinger-Poisson system}\label{sec_premres}

We give some apriori estimates for our Schr\"odinger-Poisson system (\ref{Poiss}) - (\ref{Schrod}). The results presented below are also true in dimension $d\leq 3$ as it appears in \cite{FMN}.\\ 
 From the lower bound
\[
\displaystyle -\frac{d^2}{dx^2} V^{h}\geq0\quad\text{in}\ \Omega
\]
with homogeneous Dirichlet boundary conditions, the maximum principle implies
\begin{equation}
\displaystyle V^{h}\geq0\quad\text{in}\ \Omega  \label{PdM 1}%
\end{equation}
Thus, $V^{h}$ defines a positive perturbation of the Hamiltonian $H_{0}^{h}$ given by (\ref{H0_h}). The spectra of $H_{0}^{h}$ is bounded
from below by the norm $\left\Vert U\right\Vert _{L^{\infty}(\mathbb{R})}$
and we can state%
\begin{equation}
\displaystyle \inf\left\{  \varepsilon_{i}^{h}\right\}  _{i\geq 1}\geq-\left\Vert
U\right\Vert _{L^{\infty}}\,. \label{condition 3}%
\end{equation}
Due to the definition of the source term (\ref{source}), $V^{h}$ is generated by those energy levels $\varepsilon_{i}^{h}$
placed below the cut off $\varepsilon_{S}$ of the characteristic function $f$.
In order to study the semiclassical behaviour of our system, we are interested
into the spectral properties of the Hamiltonian
\begin{equation}
\displaystyle H^{h}=-h^{2}\frac{d^2}{dx^2}+U^{h}+V^{h}\,,\quad D(H^{h})=H^{2}\cap H_{0}^{1}(\Omega)
\label{H_h}%
\end{equation}
in the spectral interval $\left[  -\left\Vert U\right\Vert _{L^{\infty}%
},\,\varepsilon_{S}\right)  $, as $h\rightarrow0$. In particular, a uniform
bound for the number of eigenvalues $\varepsilon_{i}^{h}\in\left[  -\left\Vert
U\right\Vert _{L^{\infty}},\,\varepsilon_{S}\right)  $ as $h\rightarrow0$ is
required. As noticed above, the operator $H^{h}$ is obtained as a
positive perturbation of $H_{0}^{h}$ through the Poisson potential $V^{h}$. Then, the minimax principle implies that%
\begin{equation}
\displaystyle e_{i}^{h}\leq\varepsilon_{i}^{h},\quad\forall i\geq 1
\label{condition 3.1}%
\end{equation}
and the Lemma \ref{cv_H0bnb} leads to the following result.

\begin{lemma}
\label{Lemma 1}Let $N$ be the integer given by \eqref{sp_H0}, then $\forall
h\in\left(  0,h_{0}\right]  $%
\begin{equation}
\displaystyle \#\left(  \sigma(H_{0}^{h})\cap\left[  -\left\Vert U\right\Vert _{L^{\infty}},
\varepsilon_{S}\right)  \right)  \leq N \label{N_0}%
\end{equation}%
\begin{equation}
\displaystyle \#\left(  \sigma(H^{h})\cap\left[  -\left\Vert U\right\Vert _{L^{\infty}
},\varepsilon_{S}\right)  \right)  \leq N \label{N}%
\end{equation}
where $\sigma(H)$ denotes the spectrum of $H$.
\end{lemma}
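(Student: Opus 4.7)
The plan is to derive both bounds essentially as corollaries of Lemma \ref{cv_H0bnb} together with the minimax inequality \eqref{condition 3.1} already recorded before the statement. The lower bound \eqref{condition 3} on the spectrum of $H^h$ (and the analogous bound for $H_0^h$) means the left endpoint $-\|U\|_{L^\infty}$ of the counting interval plays no role, so it suffices to control the number of eigenvalues lying in $(-\infty,\varepsilon_S)$.

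First I would establish \eqref{N_0}. Since $\varepsilon_{S}<0$, pick any $\varepsilon\in(\max(\varepsilon_{S},e_{N}),\,0)$, so that $\varepsilon_{S}\le\varepsilon$ and $\varepsilon\in(e_{N},0)$. Lemma \ref{cv_H0bnb} applied at this $\varepsilon$ yields $N^{h}=N$ for all $h\in(0,h_{0}]$ (possibly shrinking $h_{0}$). Hence
\[
\#\bigl(\sigma(H_{0}^{h})\cap[-\|U\|_{L^{\infty}},\varepsilon_{S})\bigr)\;\le\;\#\bigl(\sigma(H_{0}^{h})\cap(-\infty,\varepsilon]\bigr)\;=\;N,
\]
which gives \eqref{N_0}.

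Next I would deduce \eqref{N} by comparison. Since $V^{h}\ge 0$ by \eqref{PdM 1}, we have $H^{h}\ge H_{0}^{h}$ in the sense of quadratic forms on $H^{2}\cap H_{0}^{1}(\Omega)$. The minimax principle then delivers \eqref{condition 3.1}, namely $e_{i}^{h}\le\varepsilon_{i}^{h}$ for every $i\ge 1$. Consequently, if $k$ denotes the number of eigenvalues $\varepsilon_{i}^{h}$ lying in $[-\|U\|_{L^{\infty}},\varepsilon_{S})$, the first $k$ eigenvalues of $H_{0}^{h}$ satisfy $e_{i}^{h}\le\varepsilon_{i}^{h}<\varepsilon_{S}$, so $H_{0}^{h}$ has at least $k$ eigenvalues in $[-\|U\|_{L^{\infty}},\varepsilon_{S})$. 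Combined with \eqref{N_0} this forces $k\le N$.

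There is no real obstacle: all the spectral work has been done in Section \ref{sec_limlin}, and the nonlinear step reduces to the standard monotonicity of eigenvalues with respect to a non-negative perturbation. The only small point requiring attention is the case $\varepsilon_{S}\le e_{N}$, not excluded by hypothesis \eqref{condition 4}, which is why one must choose the auxiliary $\varepsilon$ above $\max(\varepsilon_{S},e_{N})$ rather than setting $\varepsilon=\varepsilon_{S}$ directly in Lemma \ref{cv_H0bnb}.
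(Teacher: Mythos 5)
Your proof is correct and follows the route the paper itself takes (the paper merely states the minimax inequality \eqref{condition 3.1} and then asserts that Lemma \ref{cv_H0bnb} "leads to" the result, which is exactly what you spell out). The one extra detail you supply — choosing the auxiliary level $\varepsilon$ above $\max(\varepsilon_S,e_N)$ rather than setting $\varepsilon=\varepsilon_S$, to respect the hypothesis $\varepsilon\in(e_N,0)$ of Lemma \ref{cv_H0bnb} — is a legitimate and careful point that the paper glosses over.
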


We recall the variational formulation, given in \cite{Ni1} in dimension $d\leq3$, of the Schr\"odinger-Poisson problem (\ref{Poiss}) - (\ref{Schrod}). Rephrasing the results of
this work for our system, we can state that the solution to the equation
(\ref{Poiss}) - (\ref{Schrod}) is equivalent to the minimization
problem%
\begin{equation}
\displaystyle \inf_{V\in H_{0}^{1}(\Omega)}J(V);\quad J(V)=\frac{1}{2}\int_{\Omega}|V'(x)|^{2}dx+Tr\left[  F\left(  H^{h}(V)\right)  \right]  \,, \label{Francis 1}%
\end{equation}
where $F$ is the positive function%
\[
\displaystyle F(x)=\int_{x}^{+\infty}f(s)ds\,, \label{Francis 2}%
\]
while the Hamiltonian $H^{h}(V)$ is given by%
\[
\displaystyle H^{h}(V)=-h^{2}\frac{d^2}{dx^2}+U^{h}+V\,,\quad D(H^{h}(V))=H^{2}\cap H_{0}^{1}%
(\Omega)\,. \label{Francis 3}%
\]
Moreover, the function $J(V)$ is Fr\'{e}chet-$C^{\infty}$ w.r.t. $V$, strictly
convex and coercive on $H_{0}^{1}(\Omega)$ and
(\ref{Francis 1}) admits a unique solution in this space. The following
Proposition is a direct consequence of this result.

\begin{proposition}
\label{Proposition 1}The solution to the Schr\"{o}dinger-Poisson problem
(\ref{Poiss}) - (\ref{Schrod}) is bounded in $H_{0}^{1}(\Omega)$
uniformly with respect to $h$.
\end{proposition}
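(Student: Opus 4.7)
The plan is to exploit the variational characterization established just before the proposition: $V^h$ is the unique minimizer on $H_0^1(\Omega)$ of the functional $J$ defined in \eqref{Francis 1}. In particular, taking $V=0$ as a competitor gives the inequality
\[
\frac{1}{2}\int_\Omega |(V^h)'(x)|^2\,dx + \mathrm{Tr}\bigl[F(H^h(V^h))\bigr] \;=\; J(V^h) \;\leq\; J(0) \;=\; \mathrm{Tr}\bigl[F(H_0^h)\bigr].
\]
Since $f\geq 0$ by \eqref{condition 1}--\eqref{condition 2}, the primitive $F(x)=\int_x^{+\infty} f(s)\,ds$ is pointwise non-negative, so the trace on the left-hand side may be discarded. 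This leaves
\[
\frac{1}{2}\|V^h\|_{H_0^1(\Omega)}^2 \;\leq\; \mathrm{Tr}\bigl[F(H_0^h)\bigr],
\]
where I use Poincaré's inequality to identify $\tfrac12\int |{(V^h)}'|^2$ with (a constant times) the square of the $H_0^1$ norm. The whole game is then to bound the right-hand side uniformly in $h$.

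For this I would argue as follows. Because $f$ vanishes on $[\varepsilon_S,+\infty)$, so does $F$; hence in the expansion $\mathrm{Tr}[F(H_0^h)] = \sum_{i\geq 1} F(e_i^h)$ only the eigenvalues $e_i^h$ lying in $[-\|U\|_{L^\infty},\varepsilon_S)$ produce a non-zero contribution. Lemma \ref{Lemma 1} asserts precisely that the number of such eigenvalues is at most $N$ for all $h\in(0,h_0]$. Moreover $F$ is non-increasing (since $f\geq 0$) and continuous, so on $[-\|U\|_{L^\infty},+\infty)$ it is bounded by $F(-\|U\|_{L^\infty}) = \int_{-\|U\|_{L^\infty}}^{\varepsilon_S} f(s)\,ds < \infty$. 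Combining these two facts,
\[
\mathrm{Tr}\bigl[F(H_0^h)\bigr] \;\leq\; N\cdot F(-\|U\|_{L^\infty}),
\]
which is a constant independent of $h$, and plugging back gives the desired uniform $H_0^1$ bound on $V^h$.

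There is no real obstacle here: the argument is almost entirely a bookkeeping exercise once the two ingredients — the variational inequality $J(V^h)\leq J(0)$ and the uniform bound on the number of relevant eigenvalues supplied by Lemma \ref{Lemma 1} — are in place. The only point that deserves a moment of care is the observation that the energy-filter hypotheses \eqref{condition 2} and \eqref{condDec} make $F$ a bounded, compactly decaying function on $[-\|U\|_{L^\infty},+\infty)$, so that finitely many eigenvalues contribute a finite amount; without the threshold $\varepsilon_S$ this control would fail.
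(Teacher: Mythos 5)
Your proof is correct and follows essentially the same route as the paper: take $V=0$ as a competitor in the variational problem \eqref{Francis 1}, discard the nonnegative trace term on the left, and then bound $\mathrm{Tr}[F(H_0^h)]$ using the uniform eigenvalue count from Lemma~\ref{Lemma 1} together with the fact that $F$ is bounded on $[-\|U\|_{L^\infty},\varepsilon_S)$. The only cosmetic difference is that the paper bounds the sum by $N\sup_{[-\|U\|_{L^\infty},\varepsilon_S)}F$ while you use the monotonicity of $F$ to write the bound as $N\,F(-\|U\|_{L^\infty})$; since $F$ is non-increasing these are the same constant.
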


\begin{proof}
 From the variational formulation recalled above, the solution $V^{h}$ is the
minimum of the convex map $J(V)$, therefore we have%
\[
\displaystyle \frac{1}{2}\int_{\Omega}|(V^{h})'(x)|^{2}dx+Tr\left[  F\left(  H^{h}%
(V^{h})\right)  \right]  \leq J(0)=Tr\left[  F\left(  H^{h}(0)\right)
\right]  \,,
\]
where $H^{h}(V^{h})$ simply coincides with the Hamiltonian $H^{h}$, while
$H^{h}(0)$ can be identified with $H_{0}^{h}$ defined in (\ref{H0_h}). The
relation
\[
\displaystyle Tr\left[  F\left(  H^{h}(V^{h})\right)  \right]  =\sum_{i=1}^NF(\varepsilon_{i}^{h})\geq0\,,
\]
with $N$ given in Lemma \ref{Lemma 1}, implies%
\begin{equation}
\displaystyle \left\Vert V^{h}\right\Vert _{H_{0}^{1}(\Omega)}^{2}\leq2Tr\left[  F\left(
H_{0}^{h}\right)  \right]  \,. \label{Francis 3.1}%
\end{equation}
From Lemma \ref{cv_H0bnb}, the explicit expression of the r.h.s. here is%
\[
\displaystyle Tr\left[  F\left(  H_{0}^{h}\right)  \right]  =\sum_{i=1}^NF(e_{i}%
^{h})\,.
\]
The result easily follows by combining (\ref{Francis 3.1}) with the inequality%
\[
\displaystyle \sum_{i=1}^NF(e_{i}^{h})\leq N\sup_{x\in\left[  -\left\Vert
U\right\Vert _{L^{\infty}},\varepsilon_{S}\right)  }F<\infty\,.
\]

\end{proof}

Next we use the assumption (\ref{condition 4}) and Lemma \ref{cv_H0bnb} to get uniform upper bound for the first spectral point of $H^{h}$ as $h\rightarrow0$.

\begin{lemma}
\label{Lemma 1.1}For $h_{0}$ small enough, the condition%
\begin{equation}
\displaystyle \varepsilon_{1}^{h}<\varepsilon_{S} \label{L1.1 0}%
\end{equation}
holds for all $h\in\left(  0,h_{0}\right]  $.
\end{lemma}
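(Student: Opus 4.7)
The plan is to argue by contradiction, exploiting the low-energy filter structure of $f$ together with the spectral convergence already proved for the linear operator. Suppose that along some sequence $h_n \downarrow 0$ one has $\varepsilon_1^{h_n} \geq \varepsilon_S$; the goal is to derive a contradiction with Lemma \ref{cv_H0bnb} and assumption (\ref{condition 4}).

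First I would observe that, since the eigenvalues $\varepsilon_i^h$ are numbered in increasing order, the assumption forces $\varepsilon_i^{h_n} \geq \varepsilon_S$ for every $i \geq 1$. Condition (\ref{condition 2}) then yields $f(\varepsilon_i^{h_n}) = 0$ for all $i$, so the source term defined in (\ref{source}) vanishes identically: $n[V^{h_n}] \equiv 0$ in $\Omega$. Consequently, the Poisson problem (\ref{Poiss}) reduces to $-V^{h_n\prime\prime}=0$ with homogeneous Dirichlet boundary data, whose unique $H_0^1(\Omega)$ solution is $V^{h_n}\equiv 0$.

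With $V^{h_n}\equiv 0$, the nonlinear Hamiltonian $H^{h_n}$ coincides with the linear operator $H_0^{h_n}$ introduced in (\ref{H0_h}), and therefore $\varepsilon_1^{h_n} = e_1^{h_n}$. Lemma \ref{cv_H0bnb} gives $e_1^{h_n} \to e_1$ as $n\to\infty$, and hypothesis (\ref{condition 4}) asserts $e_1 < \varepsilon_S$. Thus $e_1^{h_n} < \varepsilon_S$ for all $n$ large enough, which contradicts $\varepsilon_1^{h_n} = e_1^{h_n} \geq \varepsilon_S$. Taking $h_0$ small enough that this contradiction is already in force on $(0,h_0]$ yields the claim.

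I do not expect any real obstacle: the argument rests only on three elementary ingredients already in hand, namely the cut-off property of $f$ below $\varepsilon_S$, the uniqueness of the solution of the homogeneous Dirichlet Poisson problem, and the convergence of the bottom of the spectrum of the linear operator proved in Lemma \ref{cv_H0bnb}. No quantitative control on $V^h$ near $x_0$ is needed at this stage — that will be required only later when one wants to identify the precise limit of $\varepsilon_1^h$, not merely to guarantee the strict inequality (\ref{L1.1 0}).
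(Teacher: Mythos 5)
Your argument is correct and follows essentially the same reductio ad absurdum as the paper: from $\varepsilon_1^h\geq\varepsilon_S$ one deduces $n[V^h]\equiv 0$, hence $V^h\equiv 0$, hence $\varepsilon_1^h=e_1^h$, and then Lemma \ref{cv_H0bnb} together with assumption (\ref{condition 4}) gives $e_1^h<\varepsilon_S$ for small $h$, a contradiction. The only cosmetic difference is that you phrase the contradiction along a sequence $h_n\downarrow 0$ rather than for a generic small $\bar h$.
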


\begin{proof}
We use a reductio ad absurdum argument. Let $\bar{h}\in(0,h_{0}]$ be such that
$\varepsilon_{1}^{\bar{h}}\geq\varepsilon_{S}$. It follows from
(\ref{condition 2}) and from the definition (\ref{source}) that the
corresponding charge density, $n\left[  V^{\bar{h}}\right]  $, and, then, the
Poisson potential $V^{\bar{h}}$ are null in $\Omega$. In these conditions the
Hamiltonians $H^{\bar{h}}$ and $H_{0}^{\bar{h}}$ coincide and we have%
\begin{equation}
\displaystyle \varepsilon_{1}^{\bar{h}}=e_{1}^{\bar{h}}\geq\varepsilon_{S}\,. \label{L1.1 1}%
\end{equation}
On the other hand, as it follows from Lemma \ref{cv_H0bnb}, we have $e_{1}^{h}\longrightarrow\inf
\sigma\left(  H_{0}\right)$ when $h\rightarrow0$. Then from the assumption
(\ref{condition 4}), the condition%
\begin{equation}
\displaystyle e_{1}^{\bar{h}}<\varepsilon_{S} \label{L1.1 2}%
\end{equation}
definitely holds for $\bar{h}\rightarrow0$, which is in contradiction with
(\ref{L1.1 1}).
\end{proof}

\section{The semi-classical limit}\label{sec_limSPd=1}

In this section, the asymptotic behaviour of the Schr\"odinger-Poisson system (\ref{Poiss}) - (\ref{Schrod}) is described. Contrary to the other sections, the results presented here can not be extended to higher dimensions. Therefore, we will use the notation $(0,L)$ introduced in Section \ref{mod_limSP} to design the open domain $\Omega$.\\ 
Due to (\ref{condition 2}), only a finite number of eigenvectors contributes to the density and therefore $\displaystyle n[V^h]=\sum_{i\geq 1}f(\varepsilon_i^h)|\Psi_i^h|^2$ belongs to $L^1(0,L)$ as a finite sum of $L^1$ functions. Then, taking into account the embedding $H^1(0,L)\subset C(0,L)$ in dimension $1$, the accurate function space for the potential is:
\[
\displaystyle BV_0^2(0,L):= \{ V\in C(0,L);~ V''\in \mathcal{M}_b(0,L), \, V(0)=V(L)=0\} 
\]
The space $BV_0^2(0,L)$ with the norm:
\[
\displaystyle ||V||_{BV}:=||V||_{C(0,L)}+||V''||_{m}
\] 
is a Banach space. Here $||\mu||_{m}=|\mu|(0,L)$ is the strong norm of the bounded mesure $\mu$ which is equal to the $L^1$ norm when $\mu\in L^1(0,L)$.\\ 
We have the following continuous injections which will be useful in this work (e.g. in \cite{BNP1}, \cite{BNP2}): $\forall \alpha \in (0,1)$
\begin{equation}\label{injBV}
\displaystyle BV_0^2(0,L)\hookrightarrow W^{1,\infty}(0,L), \quad BV_0^2(0,L)\subset\subset C^{0,\alpha}(0,L) 
\end{equation}
where the second injection is compact. The continuous injection $BV_0^2(0,L)\hookrightarrow W^{1,\infty}(0,L)$ doesn't present real difficulties, the distribution function of a bounded mesure being regular. The second injection is a consequence of the compact embedding $W^{1,\infty}(0,L)\subset\subset C^{0,\alpha}(0,L)$.\\  
We can already give some apriori estimates for the potential and the density.
\begin{proposition}\label{propEst}
The density $(n[V^h])_{h\in(0,h_0]}$ is bounded in $\mathcal{M}_b(0,L)$. The potential $(V^h)_{h\in(0,h_0]}$ is bounded in $W^{1,\infty}(0,L)$ and relatively compact in $C^{0,\alpha}(0,L)$ for any $\alpha \in (0,1)$. 
\end{proposition}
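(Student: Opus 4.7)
The plan is to first bound the density, then transfer the bound to $V^h$ via the Poisson equation, and finally invoke the functional-analytic injections \eqref{injBV} that have already been recorded.

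\textbf{Step 1 -- bound on $n[V^h]$ in $\mathcal{M}_b(0,L)$.} The key observation is that, by condition (\ref{condition 2}), only the eigenvalues $\varepsilon_i^h < \varepsilon_S$ contribute to the sum defining $n[V^h]$. Lemma \ref{Lemma 1} states that there are at most $N$ such indices uniformly in $h\in(0,h_0]$. Using that $f$ is continuous and bounded on the compact interval $[-\|U\|_{L^\infty},\varepsilon_S]$, together with the $L^2$-normalisation of the $\Psi_i^h$, I would estimate
\[
\|n[V^h]\|_m \;=\; \int_0^L n[V^h](x)\,dx \;=\; \sum_{i=1}^{N}f(\varepsilon_i^h)\|\Psi_i^h\|_{L^2}^2 \;\leq\; N\sup_{[-\|U\|_\infty,\varepsilon_S]}f,
\]
which is a uniform bound in $h$. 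Since $n[V^h]\in L^1(0,L)\hookrightarrow \mathcal{M}_b(0,L)$, this yields the first claim.

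\textbf{Step 2 -- bound on $V^h$ in $BV_0^2(0,L)$.} From (\ref{Poiss}) we have $(V^h)'' = -n[V^h]$ as bounded measures on $(0,L)$, so Step 1 gives $\|(V^h)''\|_m \leq CN$. By Proposition \ref{Proposition 1}, $V^h$ is bounded in $H_0^1(0,L)$, and the one-dimensional Sobolev embedding $H_0^1(0,L)\hookrightarrow C([0,L])$ provides a uniform bound on $\|V^h\|_{C(0,L)}$ together with the boundary conditions $V^h(0)=V^h(L)=0$. Adding the two estimates gives $\sup_h \|V^h\|_{BV} < \infty$.

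\textbf{Step 3 -- conclusion via the injections \eqref{injBV}.} The continuous injection $BV_0^2(0,L)\hookrightarrow W^{1,\infty}(0,L)$ converts the uniform $BV$ bound into a uniform $W^{1,\infty}$ bound. The compact embedding $BV_0^2(0,L)\subset\subset C^{0,\alpha}(0,L)$, valid for every $\alpha\in(0,1)$, immediately yields relative compactness of $(V^h)_{h\in(0,h_0]}$ in $C^{0,\alpha}(0,L)$.

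No step really qualifies as an obstacle: the proof is essentially a bookkeeping argument once Lemma \ref{Lemma 1} and Proposition \ref{Proposition 1} are at hand. The only point that requires a moment of attention is that the measure bound $\|(V^h)''\|_m$ alone is not enough to control $V^h$ in $BV_0^2$ (which also involves the $C^0$-norm); that is why Proposition \ref{Proposition 1} together with the $1D$ Sobolev injection is invoked to supply the uniform continuity estimate.
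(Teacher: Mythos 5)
Your proof is correct and follows essentially the same route as the paper: bound $\|n[V^h]\|_m$ via the normalization of $\Psi_i^h$, the uniform eigenvalue count from Lemma \ref{Lemma 1}, and the boundedness of $f$; then combine the Poisson equation with the $H_0^1$ bound from Proposition \ref{Proposition 1} and the $1D$ Sobolev embedding to get a uniform $BV_0^2$ bound; and finally invoke the injections \eqref{injBV}. Nothing to correct.
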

\begin{proof}
We start looking for an estimate on the density. Using the normalization condition (\ref{condition eigenvectors 1}), we have:
\begin{eqnarray*}
\displaystyle ||n[V^h]||_m &=& \int_0^L n[V^h] dx = \sum_{i\geq 1}f(\varepsilon_i^h)||\Psi_i^h||_{L^2(0,L)}^2\\[1.5mm]
\displaystyle &\leq& N^h \sup_{x\in[-||U||_{\infty},\varepsilon_S)}f(x)
\end{eqnarray*}
where $N^h$ denotes the number of eigenvalues of $H^h$ below
$\varepsilon_S$. According to Lemma \ref{Lemma 1}, we have $N^h=\mathcal{O}(1)$ when $h$ tends to $0$ and the density is bounded in $\mathcal{M}_b(0,L)$.\\
For what concerns the potential $V^h$, the $H^1_0$ bound provided by Proposition \ref{Proposition 1} and the continuous embedding $H^1_0(0,L)\hookrightarrow C(0,L)$ implies
\[
\displaystyle ||V^h||_{C(0,L)} \leq C, \quad \forall h\in(0,h_0]
\] 
Then, it follows from the Poisson equation $-(V^h)''= n[V^h]$ and the bound on the density that
\[
\displaystyle ||(V^h)''||_{m}= ||n[V^h]||_m \leq C, \quad \forall h\in(0,h_0]
\]
We obtain that $(V^h)_{h\in(0,h_0]}$ is bounded in $BV_0^2(0,L)$ and (\ref{injBV}) allows to conclude.
\end{proof}

The density $(n[V^h])_{h\in(0,h_0]}$ being bounded in $\mathcal{M}_b(0,L)$, it is relatively compact in the weak* topology. Therefore, we deduce from Proposition \ref{propEst} that for any $0<\alpha<1$, it is possible to extract from any infinite set $S\subset(0,h_0]$, which has $0$ as an accumulation point, a countable subset $D$ such that $0\in\overline{D}$ and:
\begin{equation}\label{cvExt1}
 \displaystyle \lim_{\substack{h\rightarrow0\\h\in D}} (n[V^h]-\mu,\varphi) = 0, \quad \forall \varphi\in C([0,L])
\end{equation}
\begin{equation}\label{cvExt2}
\displaystyle \lim_{\substack{h\rightarrow0\\h\in D}}||V^h-V_0||_{0,\alpha} = 0,
\end{equation}
for some $\mu\in\mathcal{M}_b(0,L)$ and $V_0\in C^{0,\alpha}(0,L)$. Here, $||.||_{0,\alpha}$ is the usual norm on the H\"older space $C^{0,\alpha}(0,L)$. Then we have the following result on the convergence of the spectrum of $H^h$:
\begin{lemma}\label{cvSP_NL}
Consider $0<\alpha<1$, $D \subset (0,h_0]$ s.t. $0\in\overline{D}$, and $V_0\in C^{0,\alpha}(0,L)$
 verifying (\ref{cvExt2}). If $\theta=V_0(x_0)$ and $N_1$ is the greatest integer such that $e_{N_1}+\theta < 0$, then:
\begin{itemize}
\item[$\bullet$] $\theta\geq 0$ and $N_1\geq 1$
\item[$\bullet$] For $i=1,...,N_1$ 
\[
\displaystyle \lim_{\substack{h\rightarrow0\\h\in D}}\varepsilon_i^h=e_i+\theta
\]
 \item[$\bullet$] For $i\geq N_1+1$ 
\[
\displaystyle \liminf_{\substack{h\rightarrow0\\h\in D}}\varepsilon_i^h\geq 0
\]
\end{itemize}
\end{lemma}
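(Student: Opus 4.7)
The plan is to treat $V^h$ as an asymptotically constant perturbation of $H_0^h$, with effective value $\theta = V_0(x_0)$ on the low-energy spectral subspace, and to mirror the quasi-mode machinery of Lemma \ref{cv_H0bnb}. First, $\theta \geq 0$: the maximum principle (\ref{PdM 1}) gives $V^h \geq 0$ on $\overline{\Omega}$, so (\ref{cvExt2}) forces $V_0 \geq 0$ and $\theta \geq 0$.

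For the upper bound $\limsup \varepsilon_i^h \leq e_i + \theta$ ($i \leq N$), I would use the quasi-modes $u_j^h = \chi \tilde{\phi}_j^h$ from (\ref{eq.defui}) and compute
\[
(H^h - e_j - \theta) u_j^h = (H_0^h - e_j) u_j^h + (V^h - \theta) u_j^h .
\]
The first term is $O(e^{-\gamma/h})$ in $L^2$ by (\ref{eq.reH0}). For the second, splitting $V^h - \theta = (V^h - V_0) + (V_0 - V_0(x_0))$ reduces matters to two ingredients: the sup-norm convergence $\|V^h - V_0\|_\infty = o(1)$ from (\ref{cvExt2}), and the H\"older estimate $|V_0(x) - V_0(x_0)| \leq \|V_0\|_{0,\alpha} |x - x_0|^\alpha$. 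The elementary bound $|x - x_0|^\alpha \leq C_\alpha h^\alpha e^{c_0 |x - x_0|/h}$ absorbs the H\"older singularity into the Agmon weight of (\ref{dectoutesp}), giving $\|(V^h - \theta) u_j^h\|_{L^2} \to 0$. The Gram matrix $(\langle u_j^h, u_k^h \rangle)_{j,k} = I + \mathcal{O}(e^{-\gamma/h})$ exactly as in Remark \ref{linInd}, and the standard min-max argument on $\mathrm{span}(u_1^h, \ldots, u_i^h)$ delivers the bound.

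A matching lower bound for $i \leq N_1$ comes from a \emph{reversed} min-max argument. By Lemma \ref{Lemma 1.1}, $\varepsilon_1^h < \varepsilon_S < 0$, hence the upper bound just established forces $\varepsilon_j^h < -\eta$ uniformly for $j \leq N_1$ and some fixed $\eta > 0$; the eigenfunctions $\Psi_j^h$ are then uniformly exponentially concentrated near $x_0$ (Appendix \ref{sec_expdec}), and the same H\"older-plus-Agmon estimate yields $((V^h - \theta) \Psi_j^h, \Psi_k^h) = o(1)$ for $j, k \leq N_1$. Testing $\mathrm{span}(\Psi_1^h, \ldots, \Psi_i^h)$ against $H_0^h$ gives $e_i^h \leq \varepsilon_i^h - \theta + o(1)$, and Lemma \ref{cv_H0bnb} closes this to $\liminf \varepsilon_i^h \geq e_i + \theta$. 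In particular $\lim \varepsilon_1^h = e_1 + \theta \leq \varepsilon_S < 0$, hence $N_1 \geq 1$. For $i \geq N_1 + 1$, I would argue by contradiction: if $\liminf \varepsilon_{N_1+1}^h \leq -\eta < 0$ along a subsequence, the first $N_1 + 1$ eigenfunctions are simultaneously concentrated, and the same reversed min-max forces $e_{N_1+1}^h \leq -\eta - \theta + o(1)$, contradicting $\liminf e_{N_1+1}^h \geq -\theta$ (which follows from Lemma \ref{cv_H0bnb} whether $N_1 + 1 \leq N$ or $N_1 + 1 > N$). Monotonicity $\varepsilon_i^h \geq \varepsilon_{N_1+1}^h$ then closes $i > N_1 + 1$. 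The delicate point is the reversed min-max: the cross terms $((V^h - \theta) \Psi_j^h, \Psi_k^h)$ must remain $o(1)$ uniformly in $j, k$, which is tractable because Lemma \ref{Lemma 1} bounds the number of eigenfunctions involved by $N$, independent of $h$.
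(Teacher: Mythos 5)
Your proof is correct in substance and takes a genuinely different route from the paper. The paper proceeds via two distance-to-spectrum estimates (showing $d(\varepsilon_i^h,\sigma(\tilde H_\theta^h))\to 0$ and $d(e_i+\theta,\sigma(H^h))\to 0$) and then closes the eigenvalue count with the Helffer--Sj\"ostrand projection machinery (Proposition 2.5 and Lemma 1.3 of \cite{HeSj}), exactly as in Lemma \ref{cv_H0bnb}; moreover, to make $V^h-\theta$ small on the quasi-modes, it splits the domain at radius $R^h=h\ln\frac1h$, using the Lipschitz bound inside and Agmon decay outside. You instead run two direct min-max arguments: a forward one on $\mathrm{span}(u_1^h,\ldots,u_i^h)$ for the upper bound, and a \emph{reversed} one testing $\mathrm{span}(\Psi_1^h,\ldots,\Psi_i^h)$ against $H_0^h$ and invoking Lemma \ref{cv_H0bnb} for the lower bound; and you absorb the H\"older modulus directly into the Agmon weight via $|x-x_0|^\alpha\le C_\alpha h^\alpha e^{c_0|x-x_0|/h}$, which is cleaner than the $R^h$-splitting and gives the quantitative rate $O(h^\alpha)+\|V^h-V_0\|_\infty$. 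Both approaches rely on the same two nonlinear inputs -- the $W^{1,\infty}$ (or H\"older) bound from Proposition \ref{propEst} and the boundedness of $N^h$ from Lemma \ref{Lemma 1} -- and on the Agmon estimates of the Appendix, so neither is dramatically shorter; your version trades the \cite{HeSj} spectral-subspace lemma for an explicit control of the Gram and cross-term matrices, which must be carried uniformly in $j,k\le N$ (you correctly flag this). The one place your write-up should be reordered is the claim $N_1\ge 1$: as stated, you derive $\lim\varepsilon_1^h=e_1+\theta\le\varepsilon_S$ from the $i\le N_1$ convergence and then conclude $N_1\ge 1$, which is circular. The fix is immediate and available in your own ingredients: for $i=1$ alone, Lemma \ref{Lemma 1.1} gives $\varepsilon_1^h<\varepsilon_S<0$ without any reference to $N_1$, so $\Psi_1^h$ is already Agmon-concentrated and the reversed min-max yields $e_1^h\le\varepsilon_1^h-\theta+o(1)$, whence $e_1+\theta\le\varepsilon_S<0$ and $N_1\ge 1$; only then does it make sense to speak of $\varepsilon_j^h<-\eta$ for all $j\le N_1$.
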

The proof of this lemma is similar to the proof of Lemma \ref{cv_H0bnb} where the nonlinearity have to be considered as it is done in \cite{Pa} and \cite{FMN} in the two dimensional case.

\begin{proof}
The asymptotics of the $\varepsilon_i^h$ is given by the comparison with the Hamiltonian:
\[
\displaystyle \tilde{H}_\theta^h = -h^2\frac{d^2}{dx^2}+U^h+\theta, \quad D(\tilde{H}_\theta^h)=H^2(\R)
\]
The operator $\tilde{H}_\theta^h$ is unitary equivalent to $H_\theta$ defined by:
\[
\displaystyle H_\theta = -\frac{d^2}{dx^2}+U+\theta, \quad D(H_\theta)=H^2(\R)
\]
and therefore:
\[
\displaystyle \sigma(\tilde{H}_\theta^h)=\{e_1^{\theta},...,e_N^{\theta}\}\cup[\theta,+\infty[
\]
where $e_i^{\theta} := e_i+\theta$. We note that the normalized eigenvectors of $\tilde{H}_\theta^h$ are the $\tilde{\phi}_i^h$ introduced in the proof of Lemma \ref{cv_H0bnb}.\\
The $W^{1,\infty}$ bound obtained in Proposition \ref{propEst} implies that there exists a constant $C>0$ such that $\forall h\in (0,h_0]$:
\begin{equation}\label{bornW1inf}
\displaystyle |V^h(x)-V^h(y)|\leq C|x-y|, \quad \forall x,y \in \Omega
\end{equation}
and the condition (\ref{cvExt2}) gives:
\begin{equation}\label{cvVTet}
\displaystyle \lim_{\substack{h\rightarrow0\\h\in D}}|V^h(x_0)-\theta| = 0
\end{equation}
The previous equations are important in our nonlinear framework. Indeed, they asymptotically enable to consider $V^h$ as a constant on a domain outside which the modes of interest are exponentially decaying.\\ 
Note that $\theta\geq 0$ is a direct consequence of (\ref{PdM 1}) and (\ref{cvVTet}).\\

\noindent$\bullet~$ Set $\varepsilon \in (-||U||_{L^{\infty}},0)$ and define:
\begin{equation}\label{defNh_NL}
\displaystyle N^h=\#(\sigma(H^h)\cap(-\infty,\varepsilon])
\end{equation}
We introduce the family: $v_i^h=\chi\Psi_i^h\in D(\tilde{H}_\theta^h)$, for $i=1,...,N^h$, where $\chi\in C_{0}^{\infty}(\mathbb{R})$ is s.t. $\chi=1$ on
$B(x_0,\frac{R}{2})$ and $\chi=0$ on $\mathbb{R}\backslash B(x_0,R)$ for a radius $R>0$ verifying $B(x_0,R)\subset\Omega$.\\
As seen in the proof of Lemma \ref{cv_H0bnb}, the exponential decay (\ref{Agmon estimates 2}) for $\Psi_i^h$ implies that it is enough to estimate $(\tilde{H}_\theta^h-\varepsilon_{i}^{h})v_{i}^{h}$ to get 
\begin{equation}\label{distT0}
\displaystyle d(\varepsilon_{i}^{h},\sigma(\tilde{H}_\theta^h))1_{i\leq N^h}\underset{h\rightarrow 0, \, h\in D}{\longrightarrow}0
\end{equation}
Now we have
\[
\displaystyle (\tilde{H}_\theta^h-\varepsilon_{i}^{h})v_{i}^{h}=-h^2\chi''\Psi_i^h-2h^2\chi'(\Psi_i^h)'-\chi(V^h-\theta)\Psi_i^h
\]
The function $\chi$ being regular with derivatives supported where the eigenfunction is exponentially decaying, we have:
\[
\displaystyle ||h^2\chi''\Psi_i^h+2h^2\chi'(\Psi_i^h)'||_{L^2(\R)}\leq Ce^{-\frac{\gamma}{h}}
\]
for $C, \gamma > 0$. Next, we define $R^{h}=h\ln\frac{1}{h}$ and we suppose that $h_{0}$ is small enough so that $B(x_0,R^h)\subset\Omega$. Proposition \ref{propEst} implies that the potential $V^h$ is bounded in $L^{\infty}(\Omega)$, therefore, the application of the estimate (\ref{Agmon estimates 2}) leads to: 
\begin{align*}
\displaystyle \left\Vert \chi(V^h-\theta)\Psi_{i}^{h}\right\Vert
_{L^{2}(\mathbb{R})}^{2}  &  \leq||\chi||_{L^{\infty}(\mathbb{R})}^{2}\left(  \int_{B(x_0,R^{h})}|(V^{h}-\theta)\Psi_{i}^{h}%
|^{2}dx+\int_{\Omega\backslash B(x_0,R^{h})}|(V^{h}-\theta
)\Psi_{i}^{h}|^{2}dx\right) \\[1.5mm]
\displaystyle &  \leq C\left(  \,||V^{h}-\theta||_{L^{\infty}(B(x_0,R^{h}))}%
^{2}\,+\,\int_{\Omega}|e^{\frac{c_{0}|x-x_0|}{h}}\Psi_{i}^{h}|^{2}%
dx\,e^{-2c_{0}\frac{R^{h}}{h}}\,\right)\\[1.5mm]
\displaystyle &\leq C||V^{h}-\theta||_{L^{\infty}(B(x_0,R^{h}))}^{2}+\mathcal{O}(e^{-2c_{0}\ln\frac{1}{h}})
\end{align*}
On the other hand, we deduce from equations (\ref{bornW1inf}) and (\ref{cvVTet}) that $\forall x\in B(x_0,R^{h})$
\[
\displaystyle |V^{h}(x)-\theta| \leq |V^{h}(x)-V^{h}(x_0)|+|V^{h}(x_0)-\theta|\leq Ch\ln\frac{1}{h}+o(1)
\]
when $h\rightarrow 0$, $h\in D$. Here, the asymptotics $o(1)$ doesn't depend on $x$ and therefore
\[
\displaystyle \lim_{\substack{h\rightarrow0\\h\in D}}\left\Vert \chi(V%
^{h}-\theta)\Psi_{i}^{h}\right\Vert_{L^{2}(\mathbb{R})}1_{i\leq N^h}=0\,.
\]
We remark then that a function $\alpha(h)$ independant of $i$ can be found such that $\alpha(h)\rightarrow0$ when $h\rightarrow0$ and
\begin{equation}\label{eqRih_NL}
\displaystyle ||(\tilde{H}_\theta^h-\varepsilon_{i}^{h})v_{i}^{h}||_{L^{2}(\mathbb{R})}1_{i\leq N^h}%
\leq\alpha(h),\quad\forall h\in D\,.%
\end{equation} 
and (\ref{distT0}) follows.\\
According to Lemma \ref{Lemma 1.1}, we have $\varepsilon_1^h<\varepsilon_S$, therefore the application of (\ref{distT0}) with
$\varepsilon =\varepsilon_S$ implies $e_1^\theta<0$ and $N_1 \geq 1$. This provides the first point of the Lemma.\\

\noindent$\bullet~$ Consider the family $u_i^h\in D(H^h)$ given by \eqref{eq.defui}. Then, for $i=1,...,N_1$, we have:
\[
\displaystyle (H^h-e_{i}^\theta)u_{i}^{h}=-h^2\chi''\tilde{\phi}_i^h-2h^2\chi'(\tilde{\phi}_i^h)'+\chi(V^h-\theta)\tilde{\phi}_i^h
\]
Using the exponential decay estimate (\ref{dectoutesp}) for $\tilde{\phi}_i^h$, the same computations as in the previous point can be performed to obtain
\begin{equation}\label{distT0_opp}
\displaystyle d(e_i^\theta,\sigma(H^h))\underset{h\rightarrow 0,\, h\in D}{\longrightarrow}0
\end{equation}

In what follows, we set $\varepsilon\in (e_{N_1}^\theta,0)$ and $N^h$ the corresponding integer given by relation (\ref{defNh_NL}).\\
 From Lemma \ref{cv_H0bnb}, we have $N^h=\mathcal{O}(1)$. Then, taking into account the results (\ref{distT0}) and (\ref{distT0_opp}) above, we can already deduce that the following statement holds: for all $\delta >0$ small, $\exists h_0$ s.t. $\forall h\in D\cap(0,h_0]$ we have, even if the numerotation of the eigenvalues has to be changed,
\begin{equation}\label{eqInt_NL}
\displaystyle \varepsilon_{k,1}^h,...,\varepsilon_{k,N_k}^h \in [e_k^\theta-\delta,e_k^\theta+\delta] \quad \textrm{ for } \quad k=1,...,N_1
\end{equation}
where $N_k\geq 1$ is an integer depending on $h$ such that $\sum_{k=1}^{N_1} N_k = N^h$. This implies $N^h\geq N_1$ and if we obtain $N^h \leq N_1$, $\forall h\in D\cap(0,h_0]$, the proof will be completed.

\noindent$\bullet~$ Consider $I=[-||U||_{L^{\infty}},\varepsilon]$ and $a>0$ small enough so that
\[
\displaystyle \sigma(\tilde{H}_\theta^h)\cap\left(  (I+B(0,2a))\backslash I\right)  =\varnothing\,.
\]
Let $E$ be the vector space spanned by $v_{1}^{h},...,v_{N^h}^{h}$ and $F$ the spectral subspace associated to $\sigma(\tilde{H}_\theta^h)\cap I$. From the estimate (\ref{Agmon estimates 2}), the matrix $M=((v_{i}^{h},v_{j}^{h})_{L^{2}(\R)})_{1\leq i,j\leq N^h}$ verifies
\[
\displaystyle M=I+\mathcal{O}(e^{-\frac{\gamma}{h}})
\]
when $h\rightarrow0$ and the $v_{i}^{h}$ are linearly independant. Then, equation (\ref{eqRih_NL}) and Proposition 2.5 in \cite{HeSj} give
\[
\displaystyle d(E,F) \leq \left(  \frac{N^h}{\lambda_{min}}\right)^{\frac{1}{2}}%
\frac{\alpha(h)}{a}, \quad \forall h\in D
\]
where $\lambda_{min}$ is the smallest eigenvalue of $M$. As we already noticed, $N^h=\mathcal{O}(1)$ and therefore:
\[
\displaystyle d(E,F)\leq C \alpha(h) \leq \frac{1}{2}
\]
for all $h\in D\cap(0,h_0]$ with $h_0$ small enough. Then, Lemma 1.3 in \cite{HeSj} gives $N^h\leq N_1$.
\end{proof}

In what follows, we give, for a set $D$ verifying (\ref{cvExt1}) and (\ref{cvExt2}), the limit problem when $h\rightarrow 0$. Then the unicity of the limit allows to determine the asymptotics of the solution of (\ref{Poiss}) - (\ref{Schrod}) when $h\rightarrow 0$.
\begin{proposition}\label{prop_pbLim}
Consider $0<\alpha<1$, $D \subset (0,h_0]$ s.t. $0\in\overline{D}$, $\mu\in\mathcal{M}_b(0,L)$ and $V_0\in C^{0,\alpha}(0,L)$ verifying (\ref{cvExt1}) and (\ref{cvExt2}). Then, we have 
\begin{equation}\label{val_mu}
\displaystyle \mu = \sum_{i\geq 1}f(e_i+V_0(x_0))\delta_{x_0}
\end{equation}
and the potential $V_0$ is solution of the problem
\begin{equation}\label{pblim}
\left\{\begin{array}{ll} \displaystyle -\frac{d^2}{dx^2}V_0=\mu,&(0,L)\\[1.65mm]
\displaystyle V_0(0)=V_0(L)=0&
\end{array}\right.
\end{equation}
\end{proposition}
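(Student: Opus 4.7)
The plan splits naturally into two tasks: (i) passing to the limit in the Poisson equation to obtain (\ref{pblim}), and (ii) identifying the weak* limit $\mu$ of the density as the atomic measure in (\ref{val_mu}). Task (i) is soft: I test $-(V^h)''=n[V^h]$ against any $\varphi\in C_c^\infty(0,L)$. The left-hand side $-\int_0^L V^h\,\varphi''\,dx$ tends to $-\int_0^L V_0\,\varphi''\,dx$ thanks to the uniform convergence on $[0,L]$ contained in (\ref{cvExt2}), and the right-hand side tends to $\langle\mu,\varphi\rangle$ by (\ref{cvExt1}). Thus $-V_0''=\mu$ in $\mathcal{D}'(0,L)$. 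The boundary conditions transfer from $V^h\in H_0^1(0,L)$ by uniform convergence on $[0,L]$ (the $C^{0,\alpha}$ convergence extends continuously up to the closure, since each $V^h$ is continuous on $[0,L]$ and vanishes at $0$ and $L$).

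For task (ii), I first reduce the sum $n[V^h]=\sum_{i\geq 1}f(\varepsilon_i^h)|\Psi_i^h|^2$ to finitely many relevant indices. Lemma \ref{Lemma 1} bounds the number of eigenvalues below $\varepsilon_S$ uniformly by $N$. Lemma \ref{cvSP_NL} gives, for every $i\geq N_1+1$, $\liminf_{h\in D,\,h\to 0}\varepsilon_i^h\geq 0>\varepsilon_S$, so by (\ref{condition 2}) one has $f(\varepsilon_i^h)=0$ for $h\in D$ small enough; those terms drop out. Symmetrically, for $i\geq N_1+1$ the convention (\ref{conv_eH0}) and $\theta\geq 0$ give $e_i+\theta\geq 0>\varepsilon_S$, hence $f(e_i+\theta)=0$, so the infinite sum in (\ref{val_mu}) actually has only the $N_1$ (possibly fewer) non-zero terms indexed by $i\leq N_1$. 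For each such $i$, continuity of $f$ together with the convergence $\varepsilon_i^h\to e_i+\theta$ from Lemma \ref{cvSP_NL} gives $f(\varepsilon_i^h)\to f(e_i+\theta)$.

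The heart of the argument is the concentration statement: for each fixed $i\leq N_1$, $|\Psi_i^h|^2\,dx\rightharpoonup \delta_{x_0}$ in the weak* topology of $\mathcal{M}_b(0,L)$. Since $\varepsilon_i^h$ stays below some $\varepsilon\in(e_{N_1}+\theta,0)$ for $h\in D$ small, the Agmon-type exponential decay (\ref{Agmon estimates 2}) yields
\[
\int_\Omega e^{2c_0|x-x_0|/h}\,|\Psi_i^h|^2\,dx \leq C
\]
uniformly in $h$. For $\varphi\in C([0,L])$, I write $R^h=h\ln(1/h)$ and split
\[
\int_\Omega \varphi\,|\Psi_i^h|^2\,dx = \int_{B(x_0,R^h)}\!\!\varphi\,|\Psi_i^h|^2\,dx + \int_{\Omega\setminus B(x_0,R^h)}\!\!\varphi\,|\Psi_i^h|^2\,dx.
\]
On the inner ball, uniform continuity of $\varphi$ together with $R^h\to 0$ and $\int_\Omega|\Psi_i^h|^2\,dx=1$ shows that the first integral converges to $\varphi(x_0)$; on the outer region, I factor $1=e^{-2c_0|x-x_0|/h}e^{2c_0|x-x_0|/h}$ and use the Agmon bound to control the second integral by $C\,\|\varphi\|_\infty\,e^{-2c_0 R^h/h}=C\,\|\varphi\|_\infty\,h^{2c_0}\to 0$. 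Combined with the pointwise convergence $f(\varepsilon_i^h)\to f(e_i+\theta)$ and the fact that the sum is eventually supported on $i\leq N_1$, this gives $n[V^h]\rightharpoonup \sum_{i\geq 1}f(e_i+\theta)\delta_{x_0}$ weakly* in $\mathcal{M}_b(0,L)$; uniqueness of the limit in (\ref{cvExt1}) identifies $\mu$ as claimed.

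The delicate point is ensuring that the weight constant $c_0$ and the Agmon bound can be chosen uniformly for all $i=1,\ldots,N_1$ and all $h\in D$ small enough; this is where the separation $\varepsilon_i^h\leq\varepsilon<0$ (from Lemma \ref{cvSP_NL}) and the uniform $L^\infty$ control on $V^h$ (from Proposition \ref{propEst}) are jointly needed, to guarantee that the Agmon distance for $H^h-\varepsilon_i^h$ grows linearly in $|x-x_0|/h$ in a manner independent of $i$ and $h$.
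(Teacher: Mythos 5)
Your proof is correct and follows essentially the same path as the paper: pass to the limit in the Poisson equation distributionally, reduce the density to the $N_1$ surviving modes via Lemma~\ref{cvSP_NL} and Lemma~\ref{Lemma 1}, and establish $|\Psi_i^h|^2\,dx\rightharpoonup\delta_{x_0}$ using the Agmon decay estimate~(\ref{Agmon estimates 2}). The only cosmetic difference is in the concentration step: the paper shows $\int_0^L|\Psi_i^h|^2\varphi\,dx\to 0$ for test functions $\varphi$ supported away from $x_0$ and then invokes the mass normalization, whereas you split $\Omega$ into $B(x_0,R^h)$ and its complement with $R^h=h\ln(1/h)$; both arguments hinge on the same uniform Agmon bound, and for your inner-ball estimate you still implicitly use the exterior decay to assert $\int_{B(x_0,R^h)}|\Psi_i^h|^2\,dx\to 1$, not just $\|\Psi_i^h\|_{L^2}=1$.
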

\begin{proof}
Set $\theta = V_0(x_0)$ and take $\varepsilon\in (e_{N_1}+\theta,0)$ where $N_1$ is the integer defined in Lemma \ref{cvSP_NL}. Then, from Lemma \ref{cvSP_NL}, we have for $h_0$ small enough:
\[
\displaystyle \#(\sigma(H^h)\cap(-\infty,\varepsilon])=N_1, \quad \forall h \in D\cap(0,h_0]
\]
and the density can be written as follows:
\[
\displaystyle \sum_{i=1}^{N_1}f(\varepsilon_i^h)|\Psi_i^h|^2
\]
where $\varepsilon_i^h\leq\varepsilon$. Using the normalization (\ref{condition eigenvectors 1}) and the exponential decay estimates, we get that $|\Psi_i^h|^2dx$ is of mass $1$ and concentrates around $x_0$. It implies that for $i=1,...,N_1$
\begin{equation}\label{cvDir}
\displaystyle |\Psi_i^h|^2dx \underset{h\rightarrow 0,\, h\in D}{\rightharpoonup} \delta_{x_0}
\end{equation}
for the weak* topology on the space of bounded mesures on $(0,L)$. Indeed, for $i=1,...,N_1$ we can apply the estimate (\ref{Agmon estimates 2}) to write
\begin{align*}
\displaystyle \left\vert \int_0^L\left\vert \Psi_{i}^{h}\right\vert ^{2}\varphi
~dx\right\vert  &  \leq\int_{\mathop{\rm supp}\nolimits\varphi}\left\vert
\varphi\right\vert \,e^{-2c_0|x-x_0|/h}\,\left\vert e^{c_0|x-x_0|/h}\Psi_{i}%
^{h}\right\vert ^{2}\,dx\\[1.5mm]
\displaystyle &  \leq\left\Vert \varphi\right\Vert _{L^{\infty}}\;\left\Vert e^{c_0|x-x_0|/h}%
\Psi_{i}^{h}\right\Vert _{L^{2}(0,L)}^{2}\;\sup_{x\in
\mathop{\rm supp}\nolimits\varphi}e^{-2c_0|x-x_0|/h}\leq C\left\Vert \varphi
\right\Vert _{L^{\infty}}\sup_{x\in\mathop{\rm supp}\nolimits\varphi}%
e^{-2c_0|x-x_0|/h}%
\end{align*}
for any $\varphi\in L^{\infty}(0,L)$ and $\forall h \in D\cap(0,h_0]$. It implies that, when $\mathop{\rm supp}\nolimits\varphi$ is a compact set in $(0,L)\backslash
\left\{  x_{0}\right\}  $, there exists $c_{\varphi}>0$ such that
\[
\displaystyle \left\vert \int_0^L\left\vert \Psi_{i}^{h}\right\vert ^{2}\varphi
~dx\right\vert \leq C\,\left\Vert \varphi\right\Vert _{L^{\infty}%
}e^{-c_{\varphi}/h}, \quad\forall h \in D\cap(0,h_0]
\]
By taking the limit as $h\rightarrow0$, we get%
\[
\displaystyle \lim_{\substack{h\rightarrow0\\h\in D}}\left\vert \int_0^L\left\vert \Psi_{i}^{h}\right\vert
^{2}\varphi~dx\right\vert =0
\]
for all continuous function $\varphi\in C([0,L])$ with
$\mathop{\rm supp}\nolimits\varphi\subset(0,L)\setminus\left\{
x_{0}\right\}  $.

We recall that Lemma \ref{cvSP_NL} also gives for $i=1,...,N_1$:
\[
\displaystyle \lim_{\substack{h\rightarrow0\\h\in D}}\varepsilon_i^h=e_i+\theta
\]
Therefore, from the unicity of the limit (\ref{cvExt1}), we get the result (\ref{val_mu}) using the convergence (\ref{cvDir}), the continuity of $f$ and the convention \eqref{conv_eH0}.\\
On the other hand, the convergence (\ref{cvExt1}), (\ref{cvExt2}) is valid in $\mathcal{D}'(0,L)$:
\[ 
\displaystyle \lim_{\substack{h\rightarrow0 \\ h\in D}} (n[V^h]-\mu,\varphi) = 0, \quad 
\lim_{\substack{h\rightarrow0 \\ h\in D}} (V^h-V_0,\varphi) = 0, \quad \forall \varphi\in C^{\infty}_0(0,L)
\]
From the Poisson equation $-\frac{d^2}{dx^2}V^h=n[V^h]$, $\forall h\in (0,h_0]$ and the continuity of the derivative on $\mathcal{D}'(0,L)$, we get
\[
\displaystyle -\frac{d^2}{dx^2}V_0=\mu, \quad \mathcal{D}'(0,L)
\]
As a consequence of (\ref{cvExt2}), the potential $V^h$ tends to $V_0$ strongly in $C^{0,\alpha}(0,L)$ and the boundary conditions appearing in the problem (\ref{pblim}) follow from 
\[
\displaystyle V_0(x)=\lim_{\substack{h\rightarrow0 \\ h\in D}}V^h(x)
\] 
for $x=0$ and $x=L$.
\end{proof}

We conclude this section giving the proof of Theorem \ref{asSP}.

\begin{proof}
[Proof of Theorem \ref{asSP}]
First, we recall that the $W^{1,\infty}(0,L)$ bound for the potential was obtained in Proposition \ref{propEst}.\\
Next, let $\alpha$ be a constant in $(0,1)$ and consider $D \subset (0,h_0]$ s.t. $0\in\overline{D}$, $\mu\in\mathcal{M}_b(0,L)$ and $V_0\in C^{0,\alpha}(0,L)$ verifying (\ref{cvExt1}) and (\ref{cvExt2}). As a consequence of Proposition \ref{prop_pbLim}, the potential $V_0$ verifies the problem (\ref{val_mu})(\ref{pblim}), and it can be computed explicitly as a function of $\theta := V_0(x_0)$ by solving the following transmission problem:
\[
\left\{
\begin{array}{ll}
\displaystyle -(V_0)'' = 0,& (0,x_0)\cup(x_0,L)\\[1.55mm]
\displaystyle V_0(0)=V_0(L)=0
\end{array}
\right. 
\]
with the jump conditions:
\[
\left\{
\begin{array}{l}
\displaystyle V_0(x_0^-)=V_0(x_0^+)\\[1.55mm]
\displaystyle V_0'(x_0^+)-V_0'(x_0^-)=-(\sum_{i\geq 1}f(e_i+\theta))
\end{array}
\right.
\] 
The formula (\ref{V0expl}) for $V_0$ follows. Then, equation (\ref{V0expl}) considered at $x=x_0$ implies that the value $\theta$ of the potential $V_0$ at $x_0$, is solution of:   
\begin{equation}\label{eqTheta}
\displaystyle \theta - x_{0}(1-\frac{x_{0}}{L})\sum_{i\geq 1}f(e_{i}+\theta) = 0
\end{equation}
According to the first point in Lemma \ref{cvSP_NL}, we have $V_0(x_0)\geq 0$ and we are interested in non negative solutions of (\ref{eqTheta}). If we call $G(\theta)$ the l.h.s. of this equation, we remark that $G$ is a continuous, strictly increasing function on $[0,+\infty)$ and such that $\displaystyle \lim_{\theta \rightarrow +\infty }G(\theta)=+\infty$. Therefore, it defines a bijection from $[0,+\infty)$ to $[G(0),+\infty)$. We have $G(0)=- x_{0}(1-\frac{x_{0}}{L})\sum_{i\geq 1}f(e_{i})<0$ and $G(\varepsilon_S-e_1)=\varepsilon_S-e_1>0$ by the assumption (\ref{condition 4}). We deduce that there exists a unique value $\theta\geq 0$ solving (\ref{eqTheta}), moreover, it verifies $\theta\in (0,\varepsilon_S-e_1)$.\\
It follows that the function $V_0$ (resp. $\mu$) verifying (\ref{cvExt2}) (resp. (\ref{cvExt1})) is given in an unique way by (\ref{V0expl}) (resp. (\ref{muexpl})) where $\theta$ is the positive solution of (\ref{eqTheta}). This gives the convergence results anounced in our theorem.\\
Indeed, suppose that the convergence doesn't occure. Then, there exists a function $\varphi_0\in C([0,L])$, a constant $\varepsilon>0$ and a set $S\subset(0,h_0]$ s.t. $0\in\overline{S}$ verifying:
\begin{equation}\label{suppNCV}
\displaystyle ||V^h-V_0||_{0,\alpha}+|(n[V^h]-\mu,\varphi_0)| \geq \varepsilon, \quad \forall h\in S
\end{equation}
By applying Proposition \ref{propEst}, we can extract a set $D\subset S$ s.t. $0\in\overline{D}$ and (\ref{cvExt1}), (\ref{cvExt2}) are verified for some functions $\tilde{\mu}\in\mathcal{M}_b(0,L)$ and $\tilde{V_0}\in C^{0,\alpha}(0,L)$. Then, according to the previous uniqueness result $\tilde{\mu}=\mu$ and $\tilde{V_0}=V_0$ and we get a contradiction comparing (\ref{cvExt1}), (\ref{cvExt2}) with (\ref{suppNCV}). 
\end{proof}
\begin{remark}
\textrm{In the proof of Theorem \ref{asSP} above, we have shown that the solution $\theta$ of (\ref{eqTheta}) is in the interval $(0,\varepsilon_S-e_1)$. The condition $\theta\geq 0$ implies that the sum $\sum_{i\geq 1}f(e_{i}+\theta)$ is finite. Although the bound $\theta<\varepsilon_S-e_1$ has no impact on the proof, it gives that $\sum_{i\geq 1}f(e_{i}+\theta)>0$, and therefore, that the limit potential $V_0$ and density $\mu$ are not trivial. We deduce that Theorem \ref{asSP} provides a non trivial approximation of the solution $V^h$ of the problem (\ref{Poiss}) - (\ref{Schrod}) in the semi-classical limit $h\rightarrow 0$. As noted in the introduction, it is not the case anymore in dimension $d=2$ and $3$ as it appears in \cite{FMN}.}       
\end{remark}

\begin{description}
\item[Acknowledgements] The author gratefully acknowledges the contribution of
Francis Nier, Florian M\'{e}hats, Andrea Mantile and Naoufel Ben Abdallah, whose advice and remarks aided
in the completion of this study. He wishes to thank the support by the project "QUATRAIN" (No. BLAN07-2 219888, founded by the French National Agency for the Research).
\end{description}

\appendix{}

\section{Agmon distance}\label{sec_Agmdist}

We will mainly follow \cite{HiSi} and present an additional result in the framework of our problem. Let $f$ be a real-valued function, continuous on a bounded connected set $\Omega\subset\R$.\\
For given $x$, $y\in\Omega$, the Agmon distance related to $f$ is:
\[
\displaystyle d(x,y) = \int_0^1(f(\gamma(s)))_+^{\frac{1}{2}}|\gamma'(s)|ds
\]
where $\gamma$ is the segment linking $x$ and $y$.\\
We remark that this distance is degenerated: it may happen that $d(x,y)=0$ with $x\neq y$. However the Agmon distance verifies the following properties: $\forall\, x, y, z\in\Omega$
\[
\displaystyle d(x,y)=d(y,x), \quad d(x,z) \leq d(x,y)+d(y,z) 
\]
For $y\in\Omega$ fixed, the fonction $x\mapsto d(x,y)$ is Lipschitzian continuous. Then, it is differentiable almost everywhere by Rademacher's Theorem and everywhere $x\mapsto d(x,y)$ is differentiable, we have:
\[
\displaystyle |\nabla_x d(x,y)|\leq (f(x))_+^{\frac{1}{2}}
\] 
For a set $\omega\subset\Omega$, we define:
\[
\displaystyle d(x,\omega)=\inf_{y\in \omega}d(x,y)
\]
Then, $d(x,\omega)$ has the same regularity as $x\mapsto d(x,y)$ and
\begin{equation}\label{Agmon distance 1}
\displaystyle |\nabla d(x,\omega)|\leq (f(x))_+^{\frac{1}{2}} \quad\quad a.e.~\Omega
\end{equation}
In our case, we will use the Agmon distance associated to the potential $U^h-\varepsilon$ for a constant $\varepsilon < 0$ which corresponds to taking $f=U^h-\varepsilon$.\\
In the exponential decay estimates, it will be useful to replace the Agmon distance with the Euclidian distance. To do this, the main point is to remark that for $f=1$ the Agmon distance, $d(x,y)$, corresponds to the Euclidian one, $|x-y|$.
\begin{lemma}
Let $\omega^h$ be the support of the well $U^h$, then $\exists c_0, c_1 > 0$ such that:
\begin{equation}\label{compDist}
\displaystyle d(x,\omega^h)\geq c_0|x-x_0| - c_1h, \quad \forall x\in \Omega
\end{equation}
\end{lemma}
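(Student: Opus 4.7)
The plan is to exploit the fact that $U\le 0$ is compactly supported and $\varepsilon<0$, so the weight $(U^{h}-\varepsilon)_{+}^{1/2}$ that appears in the definition of the Agmon distance is equal to the positive constant $|\varepsilon|^{1/2}$ on the complement of $\omega^{h}$. The rest is a length estimate, since $\omega^{h}\subset B(x_{0},h)$ has Lebesgue measure at most $2h$.

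First I would recall that $U^{h}(\xi)=0$ for $\xi\in\Omega\setminus\omega^{h}$, so there $U^{h}(\xi)-\varepsilon=-\varepsilon=|\varepsilon|>0$, while on $\omega^{h}$ one only has the trivial bound $(U^{h}-\varepsilon)_{+}\ge 0$. Parametrising the segment $\gamma$ between $x$ and $y\in\omega^{h}$ by $s\mapsto x+s(y-x)$ and performing the one-dimensional change of variables $\xi=x+s(y-x)$ in the definition of $d$, one obtains
\[
d(x,y)=\int_{[x,y]}(U^{h}(\xi)-\varepsilon)_{+}^{1/2}\,d\xi
\ge |\varepsilon|^{1/2}\,\bigl|[x,y]\setminus\omega^{h}\bigr|
\ge |\varepsilon|^{1/2}\bigl(|x-y|-2h\bigr),
\]
the last inequality being trivial when the right-hand side is nonpositive since $d(x,y)\ge 0$.

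Next I would use the triangle inequality to replace $|x-y|$ by $|x-x_{0}|$: for any $y\in\omega^{h}\subset[x_{0}-h,x_{0}+h]$ one has $|x-y|\ge |x-x_{0}|-h$. Combining this with the previous lower bound and taking the infimum over $y\in\omega^{h}$ gives
\[
d(x,\omega^{h})\ge |\varepsilon|^{1/2}\bigl(|x-x_{0}|-3h\bigr),
\]
which is the announced inequality with $c_{0}=|\varepsilon|^{1/2}$ and $c_{1}=3|\varepsilon|^{1/2}$; for $|x-x_{0}|\le 3h$ the bound is vacuous because $d(x,\omega^{h})\ge 0$.

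There is no serious obstacle here: the whole argument is a length/positivity observation. The only point deserving care is the bookkeeping on $\omega^{h}$, namely that $\omega^{h}\subset B(x_{0},h)$ has diameter at most $2h$ so that the portion of the segment $[x,y]$ outside the well has length at least $|x-y|-2h$; the rest follows from the triangle inequality and from taking the infimum over $y\in\omega^{h}$.
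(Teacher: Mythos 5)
Your proof is correct and takes essentially the same approach as the paper: both rely on the observation that $(U^{h}-\varepsilon)_{+}^{1/2}=|\varepsilon|^{1/2}$ outside $\omega^{h}$ and that $\omega^{h}\subset B(x_{0},h)$. The only (cosmetic) difference is that the paper restricts the integral to the initial portion of the segment up to the first crossing of $\partial\omega^{h}$, which yields $c_{1}=|\varepsilon|^{1/2}$ rather than your $c_{1}=3|\varepsilon|^{1/2}$; since the lemma only asserts existence of $c_{0},c_{1}>0$, the difference is immaterial.
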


\begin{proof}
Let $x\in \Omega\setminus\omega^h$, $y\in\omega^h$ and consider the segment $\gamma:[0,1]\rightarrow\Omega$ linking $x$ and $y$. Then, there exists a unique value $s_0\in (0,1]$ s.t. $\gamma(s_0)\in\partial\omega^h$. Defining $y_0=\gamma(s_0)$, we have: 
\begin{eqnarray*}
\displaystyle \int_0^1(U^h(\gamma(s))-\varepsilon)_+^{\frac{1}{2}}|\gamma'(s)|ds &\geq& \int_0^{s_0}(U^h(\gamma(s))-\varepsilon)_+^{\frac{1}{2}}|\gamma'(s)|ds\\[1.5mm]
\displaystyle &=&|\varepsilon|^{\frac{1}{2}}\int_0^{s_0}|\gamma'(s)|ds = |\varepsilon|^{\frac{1}{2}}|y_0-x|
\end{eqnarray*}
Recalling that $\omega^h\subset B(x_0,h)$, it follows:
\[
\displaystyle d(x,y)\geq |\varepsilon|^{\frac{1}{2}}(|x-x_0|-h)
\]
\end{proof}

\section{Exponential decay of eigenfunctions}\label{sec_expdec}

The exponential decay estimates, also called Agmon estimates, form a standard technical tool in evaluating the rate of decay
of eigenfunctions far from the interaction support. In what follows, we apply
this technique to the case of the Schr\"{o}dinger Poisson system with a
squeezing quantum well; in particular we give some useful decay estimates for
those stationary states related to the energies below some negative energy.
\begin{lemma}
[Agmon estimates]\label{Agmon estimates}Let $\varepsilon^{h}$ be a
spectral point of the Hamiltonian
\[
\displaystyle H^{h}=-h^{2}\Delta+U^{h}+V^{h}%
\]
placed below some negative energy: $\varepsilon^{h}\leq\varepsilon$ where
$\varepsilon\in(-||U||_{L^{\infty}},0)$. The related normalized eigenvector $\Psi^{h}$ admits the estimate
\begin{equation}
\displaystyle \left\Vert h\,\nabla\left(  e^{\phi/h}\Psi^{h}\right)  \right\Vert
_{L^{2}(\Omega)}+\left\Vert e^{\phi/h}\Psi^{h}\right\Vert _{L^{2}(\Omega
)}\leq C\,, \label{Agmon estimates 1}%
\end{equation}%
where $C$ is a suitable positive constant, $\phi$ is the weight function
\[
\displaystyle \phi(x)=(1-\delta)\,d(x,\omega^{h}),\quad x\in
\Omega
\]
$\delta$ is a positive parameter smaller than $1$, $d(x,y)$ is the Agmon distance introduced in Appendix \ref{sec_Agmdist}, while $\omega^{h}$ is the support of $U^{h}$.
\end{lemma}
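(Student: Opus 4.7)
The strategy is the classical Agmon/IMS conjugation: I set $v:=e^{\phi/h}\Psi^h$ and derive a quadratic-form identity for $v$ by conjugating the eigenvalue equation with the weight $e^{\phi/h}$. A direct calculation on smooth $\phi$ (extended to our Lipschitz $\phi$ by approximating it with mollifications sharing the same Lipschitz bound, using that $v|_{\partial\Omega}=0$ so all boundary terms vanish) yields the standard identity
\begin{equation*}
\mathrm{Re}\,\langle e^{\phi/h}(H^h-\varepsilon^h)e^{-\phi/h}v,v\rangle_{L^2(\Omega)}
\;=\; h^2\|\nabla v\|_{L^2(\Omega)}^2 \,+\, \int_\Omega\bigl(U^h+V^h-\varepsilon^h-|\nabla\phi|^2\bigr)|v|^2\,dx.
\end{equation*}
The cancellation in this formula comes from integrating by parts the cross term $2h\int(\nabla\phi\cdot\nabla v)\,v$ against $h\int\Delta\phi\,|v|^2$; individually these two terms are ill-defined at the Lipschitz regularity of $\phi$, but their sum is not, which is why I would perform the computation first for a smooth approximant $\phi_\varepsilon$ and then pass to the limit. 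Since $(H^h-\varepsilon^h)\Psi^h=0$, the left-hand side is zero, giving
\begin{equation*}
h^2\|\nabla v\|_{L^2}^2 + \int_\Omega\bigl(U^h+V^h-\varepsilon^h-|\nabla\phi|^2\bigr)|v|^2\,dx = 0.
\end{equation*}

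The choice of weight is now exploited to produce a positive contribution outside $\omega^h$. By (\ref{PdM 1}) one has $V^h\ge0$, and the Agmon-distance inequality (\ref{Agmon distance 1}) applied with $f=U^h-\varepsilon$ gives
\begin{equation*}
|\nabla\phi(x)|^2 \;\le\; (1-\delta)^2\,(U^h(x)-\varepsilon)_+ \quad\text{a.e. in }\Omega.
\end{equation*}
On $\Omega\setminus\omega^h$ the well vanishes, so $(U^h-\varepsilon)_+=|\varepsilon|$, and together with $\varepsilon^h\le\varepsilon$ this yields the pointwise lower bound
\begin{equation*}
U^h+V^h-\varepsilon^h-|\nabla\phi|^2 \;\ge\; |\varepsilon| - (1-\delta)^2|\varepsilon| \;=\; |\varepsilon|\,\delta(2-\delta) \;=:\; c(\delta)>0.
\end{equation*}
On $\omega^h$ the integrand is merely bounded below by some $-M$, the constant $M$ being independent of $h$ because $\|U^h\|_\infty=\|U\|_\infty$, $|\varepsilon^h|$ is confined to $[-\|U\|_\infty,\varepsilon_S)$, $V^h$ is uniformly bounded in $L^\infty$ by Proposition \ref{propEst}, and $|\nabla\phi|^2\le(1-\delta)^2\|U\|_\infty$.

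Splitting the integral into its contributions on $\omega^h$ and on $\Omega\setminus\omega^h$ produces
\begin{equation*}
h^2\|\nabla v\|_{L^2}^2 + c(\delta)\int_{\Omega\setminus\omega^h}|v|^2\,dx \;\le\; M\int_{\omega^h}|v|^2\,dx.
\end{equation*}
The right-hand side is cheap: since $d(x,\omega^h)=0$ for $x\in\omega^h$, $\phi\equiv0$ on $\omega^h$, hence $v=\Psi^h$ there and $\int_{\omega^h}|v|^2\le\|\Psi^h\|_{L^2}^2=1$. Combining these estimates one obtains $\|v\|_{L^2}^2\le 1+M/c(\delta)$ and $\|h\nabla v\|_{L^2}^2\le M$, which is precisely (\ref{Agmon estimates 1}).

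\textbf{Main obstacle.} The algebraic content of the proof is completely routine; the only non-trivial step is the rigorous derivation of the Agmon identity at the Lipschitz regularity of $\phi$, which must be set up through a density/mollification argument so that the cancellation between the $\Delta\phi$ term and the cross term $(\nabla\phi\cdot\nabla v)\,v$ can be carried out as an identity of quadratic forms rather than a pointwise computation. Once this is in place, the positivity of $c(\delta)$ (genuinely using $\varepsilon<0$) and the normalization $\|\Psi^h\|_{L^2}=1$ close the argument immediately.
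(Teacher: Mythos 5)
Your proof follows the same line as the paper's: Agmon's identity, the bound $|\nabla\phi|^2\le(1-\delta)^2(U^h-\varepsilon)_+$ from \eqref{Agmon distance 1}, positivity of the remaining weight away from the well, and the normalization of $\Psi^h$ combined with $\phi\equiv 0$ on $\omega^h$ to absorb the bad region. Your split $\omega^h$ versus $\Omega\setminus\omega^h$ is a harmless variant of the paper's split into $\Omega_\delta^\pm=\{U^h-\varepsilon\gtrless\delta\}$ (which gives the slightly cleaner constant $\delta^2$ but is morally identical since $\Omega_\delta^-\subset\omega^h$ once $\delta<|\varepsilon|$).

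There is, however, one unnecessary and potentially dangerous step. In bounding the integrand from below on $\omega^h$ you cite Proposition~\ref{propEst} to get $V^h$ bounded in $L^\infty$ uniformly in $h$. That bound is not needed: since $V^h\ge 0$ by \eqref{PdM 1}, $V^h$ only \emph{helps} the lower bound, and $U^h+V^h-\varepsilon^h-|\nabla\phi|^2\ge -\|U\|_{L^\infty}+0+|\varepsilon|-(1-\delta)^2|\varepsilon|\ge -\|U\|_{L^\infty}$ with no information on the size of $V^h$. Beyond being redundant, invoking Proposition~\ref{propEst} here is a genuine logical hazard: that proposition rests on Proposition~\ref{Proposition 1} and Lemma~\ref{Lemma 1}, which in turn use Lemma~\ref{cv_H0bnb}, whose proof already relies on \eqref{Agmon estimates 1}--\eqref{Agmon estimates 2}. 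The circle can be broken by observing that those earlier uses concern the linear operator $H_0^h$ (i.e.\ the case $V^h\equiv 0$), but this untangling should be made explicit rather than glossed over. The paper's proof sidesteps the issue entirely by using only $V^h\ge 0$; you should do the same and drop the reference to Proposition~\ref{propEst}.

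The technical remark about deriving the Agmon identity at Lipschitz regularity by mollification is correct and is the kind of care the paper leaves implicit by citing Theorem~1.1 of \cite{HeSj}; there is no objection there.
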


\begin{proof}
We use the relation (see for instance Theorem 1.1 in \cite{HeSj})%
\[
\displaystyle h^2\int_{\Omega}|\nabla(e^{\frac{\phi}{h}}u)|^{2}dx+\int_{\Omega}(V-|\nabla
\phi|^{2})e^{2\frac{\phi}{h}}\,u^{2}dx=\int_{\Omega}e^{2\frac{\phi}{h}}\left(
-h^2\Delta+V\right)  u\cdot udx
\]
Setting $u=\Psi^{h}$, $\phi=(1-\delta)\,d(x,\omega^{h})$ and $V=U^h+V^{h}-\varepsilon^{h}$, we get%
\[
\displaystyle h^2\int_{\Omega}|\nabla(e^{\frac{\phi}{h}}\Psi^{h})|^{2}%
dx+\int_{\Omega}(U^h+V^{h}-\varepsilon^{h}-|\nabla\phi|^{2})e^{2\frac{\phi}{h}}\,(\Psi^{h})^{2}dx
\,= \, 0
\]
Next we follow the same line as in Proposition 3.3.1 of \cite{He} and
introduce the set%
\[
\displaystyle \Omega_\delta^+ =\left\{  x\in\Omega\left\vert \,U^h-\varepsilon
\geq\delta\right.  \right\}  \,,
\]
\[
\displaystyle \Omega_\delta^- =\left\{  x\in\Omega\left\vert \,U^h-\varepsilon<\delta\right.  \right\}  \,,
\]
where $\delta$ is a positive parameter such that $\delta<\min(1,|\varepsilon|)$. As it appears in \eqref{PdM 1}, we have $V^h\geq 0$ and therefore
\begin{align}
\displaystyle &h^2\int_{\Omega}|\nabla(e^{\frac{\phi}{h}}\Psi^{h})|^{2}%
dx+\int_{\Omega_\delta^+}(U^h+V^{h}-\varepsilon^{h}-|\nabla\phi|^{2})e^{2\frac{\phi}{h}}\,(\Psi^{h})^{2}dx\,=\,
 \int_{\Omega_\delta^-}(|\nabla\phi|^{2}-(U^h+V^{h}-\varepsilon^{h}))e^{2\frac{\phi}{h}}\,(\Psi^{h})^{2}dx\nonumber\\[1.5mm]
\displaystyle &\hspace{2cm} \leq\,
 \int_{\Omega_\delta^-}(|\nabla\phi|^{2}-(U^h-\varepsilon))e^{2\frac{\phi}{h}}\,(\Psi^{h})^{2}dx
 \leq\,
 \int_{\Omega_\delta^-}((U^h-\varepsilon)_+-(U^h-\varepsilon))e^{2\frac{\phi}{h}}\,(\Psi^{h})^{2}dx\nonumber\\[1.5mm]
\displaystyle &\hspace{2cm} =\int_{\Omega_\delta^-}(U^h-\varepsilon)_-e^{2\frac{\phi}{h}}\,(\Psi^{h})^{2}dx\leq ||U||_{L^\infty}\int_{\Omega_\delta^-}e^{2\frac{\phi}{h}}\,(\Psi^{h})^{2}dx\label{eq.inegsuite}
\end{align}
where we used $\varepsilon^h\leq\varepsilon$ and the inequality
\begin{equation}\label{eq.nablaphi}
\displaystyle |\nabla\phi|^{2}\leq (1-\delta)^2(U^h-\varepsilon)_+, \quad p.p.\Omega
\end{equation}
which follows from \eqref{Agmon distance 1}.\\
On $\Omega_\delta^+$, we have $(U^h-\varepsilon)_+=U^h-\varepsilon$. Using \eqref{eq.nablaphi} again, we obtain that $\forall
 x\in\Omega_\delta^+$
\begin{align*}
\displaystyle U^h+V^{h}-\varepsilon^{h}-|\nabla\phi|^{2}&\geq
 U^h-\varepsilon-|\nabla\phi|^{2} \geq
 U^h-\varepsilon-(1-\delta)^2(U^h-\varepsilon)\\[1.5mm]
 \displaystyle &=\delta(2-\delta)(U^h-\varepsilon)\geq \delta^2
\end{align*}
Injecting the result above in \eqref{eq.inegsuite}, we get:
\[
\displaystyle ||h\nabla(e^{\frac{\phi}{h}}\Psi^{h})||_{L^2(\Omega)}^2+\delta^2\int_{\Omega_\delta^+}e^{2\frac{\phi}{h}}\,(\Psi^{h})^{2}dx\leq ||U||_{L^\infty}\int_{\Omega_\delta^-}e^{2\frac{\phi}{h}}\,(\Psi^{h})^{2}dx
\] 
The domain $\Omega_\delta^-$ is inside the support of $U^h$; therefore,
 in this region, we have $\phi=0$ and taking into account the
 normalization condition $||\Psi^{h}||_{L^2(\Omega)}=1$, it follows:
\[
\displaystyle ||h\nabla(e^{\frac{\phi}{h}}\Psi^{h})||_{L^2(\Omega)}^2+\delta^2\int_{\Omega}e^{2\frac{\phi}{h}}\,(\Psi^{h})^{2}dx\leq (||U||_{L^\infty}+\delta^2)\int_{\Omega_\delta^-}e^{2\frac{\phi}{h}}\,(\Psi^{h})^{2}dx\leq||U||_{L^\infty}+\delta^2
\]  
This gives the estimate \eqref{Agmon estimates 1}.
\end{proof}

\begin{corollary}
\label{Corollary 0}Under the assumptions of Lemma \eqref{Agmon estimates}, the following estimate holds
\begin{equation}
\displaystyle \left\Vert
 he^{c_0\frac{|x-x_0|}{h}}\,\nabla\Psi^h\right\Vert _{L^{2}(\Omega)} +
\left\Vert e^{c_0\frac{|x-x_0|}{h}}\,\Psi^{h}\right\Vert_{L^{2}(\Omega)}\leq C\,, \label{Agmon estimates 2}%
\end{equation}%
for suitable positive constants $c_0$ and $C$.
\end{corollary}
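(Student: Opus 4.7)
The plan is to derive this corollary as a direct consequence of Lemma \ref{Agmon estimates} combined with the comparison inequality (\ref{compDist}) between the Agmon and Euclidean distances. The corollary replaces the (degenerate) weight $\phi(x)/h=(1-\delta)d(x,\omega^h)/h$ by the simpler weight $c_0|x-x_0|/h$, and peels off the gradient inside the derivative.

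First, I would apply (\ref{compDist}): there exist constants $c_0,c_1>0$ such that $d(x,\omega^h)\ge c_0|x-x_0|-c_1 h$ for all $x\in\Omega$. Multiplying by $(1-\delta)/h$ and exponentiating gives the pointwise comparison
\[
e^{(1-\delta)c_0\,|x-x_0|/h}\;\le\; e^{(1-\delta)c_1}\,e^{\phi(x)/h},\qquad x\in\Omega .
\]
Renaming $(1-\delta)c_0$ as the new $c_0$ in the statement, the $L^2$-bound on $e^{\phi/h}\Psi^h$ provided by (\ref{Agmon estimates 1}) immediately yields the second term $\|e^{c_0|x-x_0|/h}\Psi^h\|_{L^2(\Omega)}\le C$.

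For the gradient term, I would use the product rule to write
\[
h\,e^{\phi/h}\nabla\Psi^h \;=\; h\,\nabla\!\bigl(e^{\phi/h}\Psi^h\bigr)\;-\;e^{\phi/h}\,\nabla\phi\cdot\Psi^h .
\]
The first term on the right is controlled in $L^2(\Omega)$ by (\ref{Agmon estimates 1}). For the second, the gradient of the Agmon distance to a set satisfies (\ref{Agmon distance 1}), so almost everywhere
\[
|\nabla\phi|\;\le\;(1-\delta)(U^h-\varepsilon)_+^{1/2}\;\le\;\bigl(\|U\|_{L^\infty}+|\varepsilon|\bigr)^{1/2},
\]
which is a bound independent of $h$. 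Hence $\|e^{\phi/h}\nabla\phi\cdot\Psi^h\|_{L^2(\Omega)}\le C\|e^{\phi/h}\Psi^h\|_{L^2(\Omega)}\le C$, and therefore $\|h\,e^{\phi/h}\nabla\Psi^h\|_{L^2(\Omega)}\le C$. Applying the pointwise comparison above once more converts this into $\|h\,e^{c_0|x-x_0|/h}\nabla\Psi^h\|_{L^2(\Omega)}\le C$.

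There is no genuine obstacle here, just bookkeeping: the only thing to keep straight is that the corollary states a bound on $e^{c_0|x-x_0|/h}\nabla\Psi^h$, not on $\nabla(e^{c_0|x-x_0|/h}\Psi^h)$, so it is essential to move one factor of $e^{\phi/h}$ \emph{outside} the gradient via the product rule before absorbing the $|\nabla\phi|$ factor using (\ref{Agmon distance 1}). The constants $c_0,c_1$ coming from (\ref{compDist}) depend only on $\varepsilon$ and on the fact that $\omega^h\subset B(x_0,h)$, so they are uniform in $h\in(0,h_0]$; likewise the constant $C$ in (\ref{Agmon estimates 1}) is uniform in the choice of spectral point $\varepsilon^h\le\varepsilon$ thanks to the normalization $\|\Psi^h\|_{L^2(\Omega)}=1$. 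This yields (\ref{Agmon estimates 2}) with uniform constants, as required.
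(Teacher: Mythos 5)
Your proof is correct and follows essentially the same route as the paper: use (\ref{compDist}) to trade the Agmon weight $e^{\phi/h}$ for the Euclidean weight $e^{c_0|x-x_0|/h}$, then split $e^{\phi/h}\nabla\Psi^h$ via the product rule and absorb $|\nabla\phi|$ using (\ref{Agmon distance 1}), invoking (\ref{Agmon estimates 1}) for both resulting terms. The only cosmetic difference is that you bound $|\nabla\phi|$ by $(\|U\|_{L^\infty}+|\varepsilon|)^{1/2}$ while the paper keeps $\sup_\Omega(U^h-\varepsilon)_+^{1/2}$; both are uniform in $h$, so the argument is unaffected.
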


\begin{proof}
As a direct consequence of the estimate (\ref{Agmon estimates 1}) and the
inequality \eqref{compDist}, it follows that the $L^{2}$-norm of the
function $e^{c_0\frac{|x-x_0|}{h}}\,\Psi^{h}$ is
uniformly bounded w.r.t. $h$.

For what concerns the first term in (\ref{Agmon estimates 2}), we
notice that%
\[
\displaystyle e^{c_0\frac{|x-x_0|}{h} -c_{1}}\,\left\vert \nabla\Psi^{h}\right\vert \leq\left\vert e^{\frac{\phi}{h}}\,\nabla\Psi^{h}\right\vert =\left\vert \nabla\left(  e^{\frac{\phi}{h}}\Psi^{h}\right)  -\left(  \nabla e^{\frac{\phi}{h}}\right)  \,\Psi^{h}\right\vert \,.
\]
The term $\nabla e^{\frac{\phi}{h}}$ at the r.h.s. is pointwise bounded by%
\[
\displaystyle \left\vert \nabla e^{\frac{\phi}{h}}\right\vert \leq \frac{1}{h}\left(  U^h-\varepsilon
\right)_{+}^{\frac{1}{2}}e^{\frac{\phi}{h}}\,,
\]
as it comes from (\ref{Agmon distance 1}). Then, using once more the relation
(\ref{Agmon estimates 1}), we obtain%
\[
\displaystyle e^{-c_{1}}\left\Vert
 he^{c_0\frac{|x-x_0|}{h}}\,\nabla\Psi^h\right\Vert _{L^{2}(\Omega)}\leq\left\Vert h\nabla\left(
e^{\frac{\phi}{h}}\Psi^{h}\right)  \right\Vert _{L^{2}(\Omega)}+\sup_{x\in\Omega}\left(  U^h-\varepsilon\right)_{+}^{\frac{1}{2}%
}\,\left\Vert e^{\frac{\phi}{h}}\Psi^{h}\right\Vert _{L^{2}(\Omega)}\leq C\,.
\]
\end{proof}

\end{document}